\documentclass{amsart}
\usepackage[utf8]{inputenc}
\usepackage{graphicx}

\usepackage{nir}
\addbibresource{bib.bib}

\title{Prime Sums}
\author{Anupam Datta}

\author{Nir Elber}
\email{nire@berkeley.edu}

\author{Raymond Feng}

\author{David Lowry-Duda}
\email{davidlowryduda@brown.edu}

\author{Henry Xie}

\address{ICERM, 121 South Main Street, Box E, 11th Floor, Providence, RI 02903}
\thanks{DLD was supported by the Simons Collaboration in Arithmetic
Geometry, Number Theory, and Computation via the Simons Foundation grant
546235.}
\date{\today}

\begin{document}

\maketitle

\begin{abstract}
    We study the properties of certain graphs involving the sums of primes. Their structure largely turns out to relate to the distribution of prime gaps and can be roughly seen in Cram\'er's model as well. We also discuss generalizations to the Gaussian integers.
\end{abstract}

\tableofcontents

\newpage

\section{Introduction}

The main motivation is why the following figure has so much structure.
\begin{figure}[ht]
    \centering
    \includegraphics[width=0.7\textwidth]{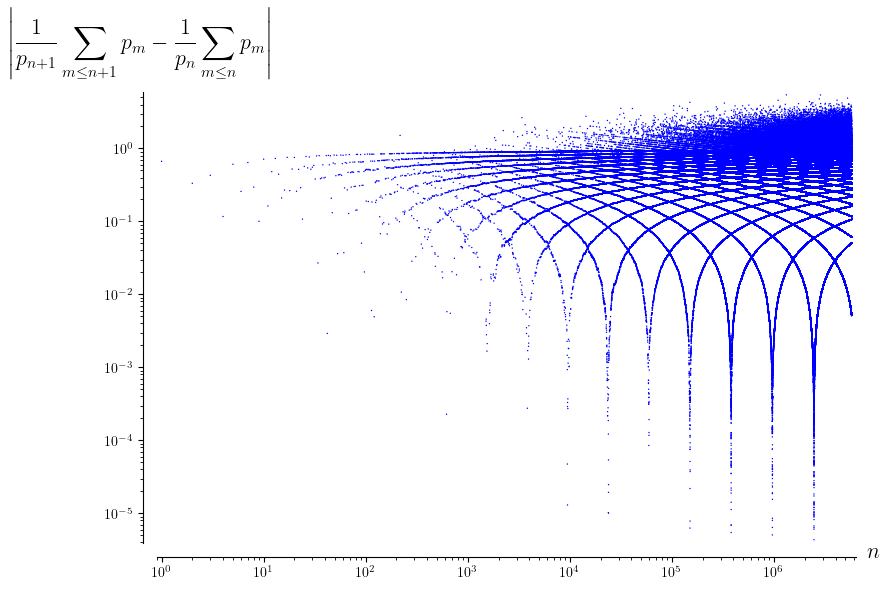}
    \caption{Log-log plot of absolute consecutive differences of $a_n:=\frac1{p_n}\sum_{m\le n}p_m$}
    \label{fig:motivation}
\end{figure}

\noindent There are a number of features of the above figure which stand out here: the relatively constant horizontal line at the top which gives way to a more chaotic ``cloud'' of points, the ``troughs'' which appear to occur at somewhat regular intervals, the various other curves which seem to repeat themselves, and so on. Each of these aspects of the graph will be studied in their own right.

With this in mind, the layout of this paper is as follows. In \autoref{sec:graph}, we study various aspects of \autoref{fig:motivation}. The highlights are that most of the repeating structure comes from prime gaps over various sizes, where the key lemma is as follows.
\begin{restatable}[Key lemma]{prop}{keylemma} \label{prop:goestohalf}
    We have that
    \[\lim_{n\to\infty}\frac{\log n}{p_n-p_{n+1}}\left(\frac1{p_{n+1}}\sum_{m\le n}p_m-\frac1{p_n}\sum_{m\le n}p_m\right)=\frac12.\]
\end{restatable}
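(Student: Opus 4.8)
The plan begins with a purely algebraic simplification, and this is where the apparent dependence on the prime gap vanishes. Writing $S_n := \sum_{m \le n} p_m$, the quantity in parentheses is
\[
\frac{S_n}{p_{n+1}} - \frac{S_n}{p_n} = S_n\left(\frac{1}{p_{n+1}} - \frac{1}{p_n}\right) = S_n \cdot \frac{p_n - p_{n+1}}{p_n p_{n+1}}.
\]
Multiplying by the prefactor $\frac{\log n}{p_n - p_{n+1}}$ cancels the gap $p_n - p_{n+1}$ exactly (sign included), so the entire expression collapses to
\[
\frac{\log n}{p_n - p_{n+1}}\left(\frac{S_n}{p_{n+1}} - \frac{S_n}{p_n}\right) = \frac{S_n \log n}{p_n p_{n+1}}.
\]
Hence the claim reduces to showing $\dfrac{S_n \log n}{p_n p_{n+1}} \to \tfrac12$, and remarkably no information about the size or regularity of individual prime gaps is needed.

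Next I would invoke the prime number theorem. From $p_n \sim n \log n$ together with $p_{n+1}/p_n \to 1$ one gets $p_n p_{n+1} \sim n^2 (\log n)^2$. For the numerator I would establish the standard asymptotic $S_n \sim \tfrac12 n^2 \log n$, either by summing $p_m \sim m \log m$ via the integral comparison $\sum_{m \le n} m \log m \sim \int_1^n x \log x\,dx \sim \tfrac12 n^2 \log n$, or by partial summation from $\pi(x) \sim x/\log x$ to obtain $\sum_{p \le x} p \sim x^2/(2\log x)$ and then substituting $x = p_n$.

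Assembling these,
\[
\frac{S_n \log n}{p_n p_{n+1}} \sim \frac{\tfrac12 n^2 \log n \cdot \log n}{n^2 (\log n)^2} = \frac12,
\]
which finishes the argument. The only real obstacle is justifying the numerator asymptotic $S_n \sim \tfrac12 n^2 \log n$ with honest error control in the integral (or Abel summation) step rather than treating it heuristically; this is classical, but it is the one place where care is required. Everything else amounts to combining finitely many asymptotic equivalences as a product and quotient, through which the limit passes cleanly.
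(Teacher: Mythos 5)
Your proposal is correct and follows essentially the same route as the paper's proof: cancel the gap $p_n - p_{n+1}$ algebraically, then plug in the known asymptotics $\sum_{m\le n}p_m\sim\frac12 n^2\log n$ and $p_n\sim p_{n+1}\sim n\log n$. The only difference is that you sketch how to derive the sum asymptotic (via integral comparison or partial summation), whereas the paper simply cites it as a known result.
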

\noindent Relating \autoref{prop:goestohalf} with \autoref{fig:motivation}, lets us more finely discuss its structure. For example, we are able to prove the following.
\begin{restatable}{prop}{troughs} \label{prop:troughs}
    As $n\to\infty,$ the location of the troughs (provided the troughs exist) in \autoref{fig:goodloglog} occur at $n\approx e^k$ where $k$ varies over positive integers.
\end{restatable}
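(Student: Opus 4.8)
The plan is to start from the exact algebraic identity
\[a_{n+1}-a_n = 1 + \left(\frac{1}{p_{n+1}}-\frac{1}{p_n}\right)\sum_{m\le n}p_m,\]
obtained by splitting off the term $p_{n+1}/p_{n+1}=1$ when passing from $a_n$ to $a_{n+1}$. This isolates a constant increment of $1$ together with a correction term that is \emph{exactly} the quantity governed by \autoref{prop:goestohalf}. Feeding the key lemma into this identity, and writing $g_n:=p_{n+1}-p_n$ for the $n$th prime gap, I obtain the controlling asymptotic
\[a_{n+1}-a_n \sim 1 - \frac{g_n}{2\log n}.\]
This one formula already explains the gross features of the figure: the typical gap $g_n\approx\log n$ gives $a_{n+1}-a_n\approx\tfrac12$, which is the flat top line, while sorting the points by the value of $g_n$ produces the repeating family of curves.

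Next I would organize the plot according to the gap value. For each fixed even $g$, the subsequence of indices with $g_n=g$ lies, up to the error in \autoref{prop:goestohalf}, on the graph of the function $n\mapsto\log\bigl|1-\tfrac{g}{2\log n}\bigr|$; these are the repeating curves. A trough is a place where $|a_{n+1}-a_n|$ plunges toward $0$, i.e.\ where one of these curves diverges to $-\infty$, which for the curve of gap $g$ happens precisely when $1-\tfrac{g}{2\log n}=0$, that is, at $n=e^{g/2}$. Since every prime gap beyond the first is even, write $g=2k$; the trough attached to the gap $2k$ then sits at $n=e^{k}$. Conversely, evaluating the curve of gap $g$ at $n=e^{k}$ gives $1-\tfrac{g}{2k}$, which vanishes if and only if $g=2k$, and the troughs for distinct even gaps sit at the well-separated points $e^{k}$, so the dip near $n\approx e^{k}$ is produced by, and only by, the gap of size $2k$. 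This pins the troughs to $n\approx e^{k}$ as $k$ ranges over the positive integers.

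The main obstacle, and the reason for the hedge \emph{provided the troughs exist}, is a genuine question about prime gaps rather than a calculation. For the trough indexed by $k$ to actually appear, there must be an index $n$ near $e^{k}$ at which $g_n=2k$; since $p_n\approx e^{k}\log e^{k}=k\,e^{k}$ there, this demands a gap of size $2k$ at height about $k\,e^{k}$, i.e.\ a gap roughly twice the local average $\log p_n\approx k$. Such twice-average gaps are expected at every scale, but proving that one occurs at the precise location $n\approx e^{k}$ for each $k$ lies beyond current unconditional results on prime gaps, so I would state the conclusion conditionally. The remaining steps are routine: controlling the error in \autoref{prop:goestohalf} uniformly enough that the relevant curve genuinely attains a deep local minimum near $n=e^{g/2}$, and checking that no competing gap-curve descends lower there, both follow from the identity and the key lemma once the bookkeeping is done.
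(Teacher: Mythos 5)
Your argument is correct and follows essentially the same route as the paper: you use \autoref{prop:goestohalf} to decompose the plot into curves $\log\bigl|1-\tfrac{g}{2\log n}\bigr|$ indexed by gap value $g$, locate a trough where such a curve plunges to $-\infty$ at $\log n = g/2$, and invoke the evenness of prime gaps to write $g=2k$ and conclude $n\approx e^k$, which is exactly the discussion the paper's one-line proof points back to. Your extra touches (the converse check that only the gap $2k$ produces the dip near $e^k$, and the observation that existence of each trough requires a gap of roughly twice the local average near height $ke^k$) are consistent with, and somewhat more explicit than, the paper's remark following the proposition.
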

\begin{remark}
    It should be noted that this is simply stating where the troughs occur only {when they exist}; it is unknown whether there is a trough for every $k$ (this requires guaranteeing the existence of a specific prime gap value around a certain region of primes), or whether any of the troughs extend infinitely to the right (de Polignac's conjecture \parencite{polignac}, if true, would imply this statement because every even integer would appear infinitely often as a prime gap).
\end{remark}

Then in \autoref{sec:random}, we quickly compare \autoref{fig:motivation} with a variant of Cram\'er's model and find that, even though what we can prove about the error term is stronger with Cram\'er's model (in \autoref{prop:cramer}), in practice, the primes appear more structured.

Lastly, in \autoref{sec:gaussian}, we extend some of these ideas to the Gaussian integers. Even though studying even the main term of prime sums over $\ZZ[i]$ appears quite difficult when attempting to keep track of the angle, we are able to achieve something. Namely, we are able to show the following.
\begin{restatable}{thm}{gaussianthm} \label{thm:gaussian}
    Fix $n$ a nonzero integer. Then, for any real number $x,$
    \[\sum_{\op N(\pi)\le x}\frac{\pi^n}{\op N(\pi)^{n/2}}=O\left(\frac{x}{(\log x)^A}\right),\]
    where $A$ is an arbitrarily large (but fixed) constant. Here, the sum is taken over Gaussian primes $\pi\in\ZZ[i]$ with $\op N(\pi)\le x$; the sum is a real number.
\end{restatable}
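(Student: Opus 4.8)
The plan is to first strip the summand down to a pure phase. Writing a Gaussian prime in polar form $\pi = \sqrt{N(\pi)}\,e^{i\theta_\pi}$, the ratio collapses to $\pi^n/N(\pi)^{n/2} = e^{in\theta_\pi}$, so the quantity to estimate is $S(x) := \sum_{N(\pi)\le x} e^{in\theta_\pi}$. That $S(x)$ is real is then immediate from conjugation symmetry: the map $\pi\mapsto\bar\pi$ permutes the Gaussian primes of norm $\le x$ (since the norm is conjugation-invariant and $\bar\pi$ is prime exactly when $\pi$ is), while $e^{in\theta_{\bar\pi}} = \overline{e^{in\theta_\pi}}$, so $S(x) = \overline{S(x)}$.

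Next I would organize the sum by associate classes. Every Gaussian prime element has exactly four associates $\{\pi_0,\, i\pi_0,\, -\pi_0,\, -i\pi_0\}$, all of the same norm, with phases differing by multiples of $\pi/2$; summing $e^{in\theta}$ over one class yields $U_n\, e^{in\theta_{\pi_0}}$, where $U_n := 1 + i^n + (-1)^n + (-i)^n$. Since $U_n = 4$ when $4 \mid n$ and $U_n = 0$ otherwise, two cases emerge. If $4 \nmid n$, then $S(x) = 0$ identically and the bound is trivial. If $4 \mid n$, write $n = 4m$ with $m \ne 0$; then $S(x) = 4\sum_{N\mathfrak{p}\le x}\chi(\mathfrak{p})$, where the sum runs over prime ideals $\mathfrak{p}$ of $\ZZ[i]$ and $\chi$ is the Hecke Gr\"ossencharacter $\chi((\alpha)) = (\alpha/|\alpha|)^{4m}$. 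This is well defined on ideals precisely because each unit $i^j$ satisfies $(i^j)^{4m} = 1$, the same arithmetic that forced $U_n$ to vanish in the complementary case.

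It then remains to bound the twisted prime sum $\sum_{N\mathfrak{p}\le x}\chi(\mathfrak{p})$, and for this I would invoke the analytic theory of the Hecke $L$-function $L(s,\chi) = \prod_{\mathfrak{p}}\bigl(1-\chi(\mathfrak{p})N\mathfrak{p}^{-s}\bigr)^{-1}$. Because $\chi$ has nontrivial infinity type (here $4m\ne 0$), it is a nonprincipal Gr\"ossencharacter, so $L(s,\chi)$ continues to an entire function, satisfies Hecke's functional equation, and---crucially---is nonvanishing on the line $\Re(s)=1$ with a zero-free region of the standard shape $\sigma > 1 - c/\log(|t|+2)$. Feeding this into a Perron/contour argument (equivalently, the Prime Ideal Theorem for Gr\"ossencharacters) yields $\sum_{N\mathfrak{p}\le x}\chi(\mathfrak{p}) = O\!\left(x\exp(-c\sqrt{\log x})\right)$, which is stronger than the asserted $O(x/(\log x)^A)$ for every fixed $A$.

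The main obstacle is exactly this last analytic input: establishing (or citing) the holomorphic continuation, functional equation, and zero-free region for $L(s,\chi)$. Everything preceding it is elementary bookkeeping about associates and phases, but the power-saving cancellation genuinely requires the non-vanishing of $L(s,\chi)$ near $\Re(s)=1$---precisely the ingredient Hecke introduced to prove that Gaussian primes equidistribute in angular sectors. I would either cite the Prime Ideal Theorem for Hecke characters directly, or, for a self-contained treatment, specialize the general argument to $\ZZ[i]$, where the functional equation can be made fully explicit through Hecke's theta-series construction attached to the weight-$2m$ angular character.
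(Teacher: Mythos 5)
Your proposal is correct, but it takes a genuinely different route from the paper. The paper keeps the sum over prime \emph{elements}, partitions the circle into roughly $|n|x^{1/3}$ narrow sectors, cites Koval\textquotesingle chik's theorem on the number of Gaussian primes in a narrow sector, and runs a Riemann-sum argument in which the main terms $\cos(2\pi k/N)$ cancel when summed over the sectors, yielding $O\left(x/(\log x)^A\right)$. You instead collapse each associate class $\{\pi_0,i\pi_0,-\pi_0,-i\pi_0\}$ first, so the unit sum $U_n=1+i^n+(-1)^n+(-i)^n$ kills the sum \emph{identically} when $4\nmid n$ (sharpening the paper's bound to exact vanishing in that case, consistent with the paper's own motivating remarks that $\sum\pi=\sum\pi^2=0$), and when $4\mid n$ you identify the sum with $4\sum_{\op N(\mathfrak p)\le x}\chi(\mathfrak p)$ for the Gr\"ossencharacter $\chi((\alpha))=(\alpha/|\alpha|)^n$, well defined on ideals precisely because $4\mid n$; the Prime Ideal Theorem for Hecke $L$-functions then gives $O\left(x\exp(-c\sqrt{\log x})\right)$, which is stronger than the stated bound. (Since $\chi^2\ne1$, the character is not quadratic, so no Siegel-zero caveat is needed, though it would be worth saying this when citing the zero-free region.) Both arguments ultimately rest on Hecke's angular equidistribution of Gaussian primes --- you go to the analytic source, while the paper invokes a packaged sector count --- but they are not interchangeable in scope: the paper's sector method applies verbatim to $\sum e^{iy\theta_\pi}$ for arbitrary nonzero \emph{real} $y$ (its \autoref{prop:primwalk}), whereas your associate-class reduction is special to integer exponents, since $(z/|z|)^y$ is not a well-defined character on ideals for non-integer $y$. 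In short, your route buys strength and exactness for integer $n$; the paper's buys uniformity in the exponent.
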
 
\noindent The ideas which go into the proof of \autoref{thm:gaussian} motivate a random model for the Gaussian primes as well, which we discuss at the end of the section.

\section{Graphing Large Values} \label{sec:graph}

As promised, the main cause of the structure of \autoref{fig:motivation} is the following asymptotic.
\keylemma*
\begin{proof}
    This is a culmination of some known results. After factoring out the sum and combining the fractions, we are computing
    \[\lim_{n\to\infty}\frac{\log n}{p_n-p_{n+1}}\cdot\frac{p_n-p_{n+1}}{p_{n+1}p_n}\sum_{m\le n}p_m.\]
    Here, the (negative) prime gap $p_n-p_{n+1}$ will cancel out. It is also known that $\sum_{m\le n}p_m\sim\frac12n^2\log n$ and that $p_n\sim p_{n+1}\sim n\log n$; plugging these all in gives
    \[\lim_{n\to\infty}\frac{\log n}1\cdot\frac1{(n\log n)(n\log n)}\cdot\frac12n^2\log n=\frac12.\]
    This finishes the proof.
\end{proof}
\begin{remark}
    The convergence of this function is actually very slow (see \autoref{fig:goestohalf}), but the function itself appears quite smooth. We will discuss this below.
\end{remark}
To analyze the convergence of the above quantity, we prove the following proposition:
\begin{prop} \label{prop:error}
    We actually have \[\frac{\log n}{p_np_{n+1}}\sum_{m\le n}p_m=\frac12-\frac{\log\log n}{2\log n}+\frac1{4\log n}+o\left(\frac1{\log n}\right).\]
\end{prop}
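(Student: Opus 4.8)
The plan is to substitute sufficiently precise asymptotic expansions for both $\sum_{m\le n}p_m$ and the product $p_np_{n+1}$, and then expand the resulting ratio to order $1/\log n$. The two inputs I would use are the classical second-order asymptotic for the $n$-th prime,
\[p_n=n\left(\log n+\log\log n-1+o(1)\right),\]
and the consequence, obtained by summing this over $m\le n$,
\[\sum_{m\le n}p_m=\frac{n^2}2\left(\log n+\log\log n-\tfrac32+o(1)\right).\]
To derive the second formula I would split $p_m=m\log m+m\log\log m-m+o(m)$ and evaluate each piece by comparison with the corresponding integral (Euler--Maclaurin): $\sum_{m\le n}m\log m=\frac{n^2}2\log n-\frac{n^2}4+o(n^2)$, then $\sum_{m\le n}m\log\log m=\frac{n^2}2\log\log n+o(n^2)$ (the integral of $x\log\log x$ having a correction of size $n^2/\log n=o(n^2)$), and $\sum_{m\le n}m=\frac{n^2}2+o(n^2)$; the constants $-\tfrac14$ and $-1$ combine to $-\tfrac32$, and the tail $\sum o(m)=o(n^2)$ by a routine averaging argument.

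Next I would dispose of the distinction between $p_{n+1}$ and $p_n$. Applying the same prime asymptotic at index $n+1$ and using $\log(n+1)=\log n+O(1/n)$ together with $(n+1)/n=1+1/n$, the fluctuating $o(1)$ terms enter only after division by $\log n$, so $p_{n+1}/p_n=1+o(1/\log n)$; hence $p_np_{n+1}=p_n^2\,(1+o(1/\log n))$, and this factor perturbs the final answer only by $o(1/\log n)$. Importantly, this route avoids any appeal to bounds on prime gaps.

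Writing $\ell=\log n$, $\lambda=\log\log n$, and setting $L=\ell+\lambda-1+o(1)$, $M=\ell+\lambda-\tfrac32+o(1)$ so that $p_n=nL$ and $\sum_{m\le n}p_m=\frac{n^2}2M$, the quantity becomes
\[\frac{\ell}{p_np_{n+1}}\sum_{m\le n}p_m=\frac{\ell M}{2L^2}\left(1+o(1/\ell)\right).\]
Factoring $\ell$ out of $L$ and $M$ and expanding $\frac{1+v}{2(1+u)^2}=\frac12\bigl(1+v-2u+O(u^2)\bigr)$ with $u=(\lambda-1+o(1))/\ell$ and $v=(\lambda-\tfrac32+o(1))/\ell$, the linear term is $\frac12\cdot\frac{(\lambda-\tfrac32)-2(\lambda-1)}{\ell}=\frac{-\lambda+\tfrac12}{2\ell}$, which is exactly $-\frac{\log\log n}{2\log n}+\frac1{4\log n}$.

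The main thing to watch---and what I expect to be the only real obstacle---is the precision bookkeeping: one must verify that second-order expansions suffice. The cross terms $u^2,uv=O(\lambda^2/\ell^2)$ are negligible because $(\log\log n)^2/(\log n)^2=o(1/\log n)$, and the stray $o(1)$'s in $L,M$ enter as $o(1)/\ell=o(1/\ell)$, so no third-order term of $p_n$ or of the prime sum is required. Conversely, one must confirm that the $\log\log n$ terms do \emph{not} fully cancel (they combine with coefficient $-1$) and that the exact constant $-\tfrac32$ in the sum asymptotic is carried correctly, since these are precisely what produce the two displayed correction terms.
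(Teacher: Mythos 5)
Your proposal is correct and follows essentially the same route as the paper's proof: both substitute the second-order asymptotics $p_n=n(\log n+\log\log n-1+o(1))$ and $\sum_{m\le n}p_m=\frac{n^2}2(\log n+\log\log n-\frac32+o(1))$, reduce $p_{n+1}$ to the index-$n$ asymptotic, and expand the ratio to first order in $1/\log n$, with identical bookkeeping of why $(\log\log n)^2/(\log n)^2$ and the stray $o(1)$'s are negligible. The only difference is cosmetic: the paper cites both asymptotics from the literature (Sinha), whereas you additionally sketch a derivation of the prime-sum asymptotic by partial summation, which the paper does not need to do.
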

\begin{proof}
    We use the bounds of \[p_n=n(\log n+\log\log n-1+o(1))\] and \[\sum_{m\leq n}p_m=\frac{n^2}2\left(\log n+\log\log n-\frac32+o(1)\right),\]which can be found in \parencite{sinha2015asymptotic}. Note that
    \begin{align*}
        \log(n+1)+\log\log(n+1)-1+o(1)&=\log n+\log\left(1+\frac1n\right)+\log\log n+\log\left(\frac{\log(n+1)}{\log n}\right)-1+o(1)\\
        &=\log n+\log\log n-1+o(1).
    \end{align*}
    Also, we have
    \begin{align*}
        \frac{\log n}{\log n+\log\log n-1+o(1)}&=1-\frac{\log\log n-1+o(1)}{\log n+\log\log n-1+o(1)}\\
        &=1-\frac{\log\log n-1}{\log n}+(\log\log n-1)\cdot\left[\frac1{\log n}-\frac1{\log n+\log\log n-1}\right]\\
        & \qquad +o\left(\frac1{\log n}\right)\\
        &=1-\frac{\log\log n-1}{\log n}+o\left(\frac1{\log n}\right).
    \end{align*}
    Then we have that
    \begin{align*}
        \frac{\log n}{p_np_{n+1}}\sum_{m\le n}p_m&=\frac{\log n}{(n+1)(\log n+\log\log n-1+o(1))}\cdot\frac{\frac{n^2}2\left(\log n+\log\log n-\frac32+o(1)\right)}{n(\log n+\log\log n-1+o(1))}\\
        &=\frac{\log n}{(n+1)(\log n+\log\log n-1+o(1))}\cdot\frac n2 \\
        & \qquad\cdot\left[1-\frac1{2(\log n+\log\log n-1+o(1))}+o\left(\frac1{\log n}\right)\right]\\
        &=\left(1-\frac{\log\log n-1}{\log n}+o\left(\frac1{\log n}\right)\right)\left(\frac12-\frac1{2n+2}\right)\cdot\left[1-\frac1{2\log n}+o\left(\frac1{\log n}\right)\right]\\
        &=\frac12-\frac1{4\log n}-\frac{\log\log n-1}{2\log n}+o\left(\frac1{\log n}\right)\\
        &=\frac12-\frac{\log\log n}{2\log n}+\frac1{4\log n}+o\left(\frac1{\log n}\right),
    \end{align*}
    as claimed.
\end{proof}
\begin{figure}[!htbp]
    \centering
    \includegraphics[width=0.7\textwidth]{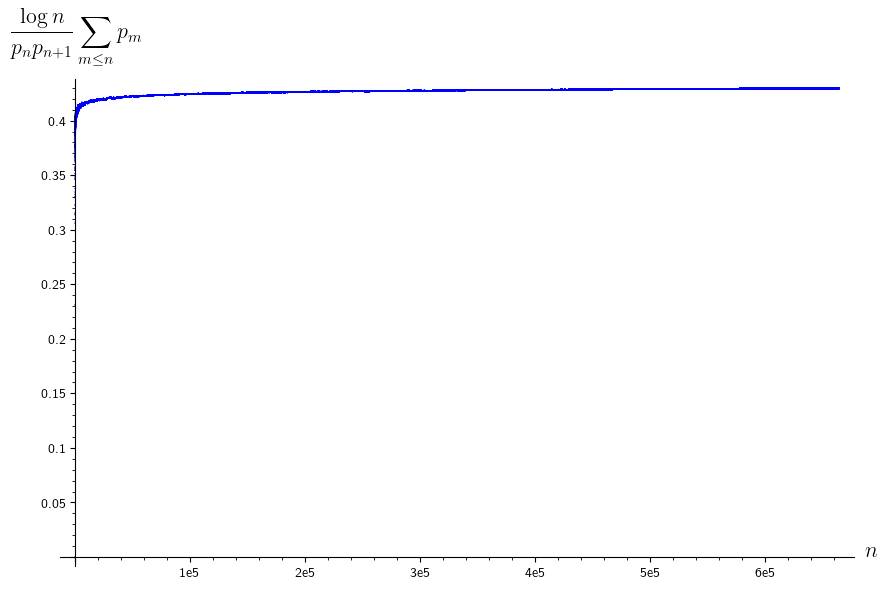}
    \caption{Convergence of \autoref{prop:goestohalf}}
    \label{fig:goestohalf}
\end{figure}
\begin{figure}[!htbp]
    \centering
    \includegraphics[width=0.7\textwidth]{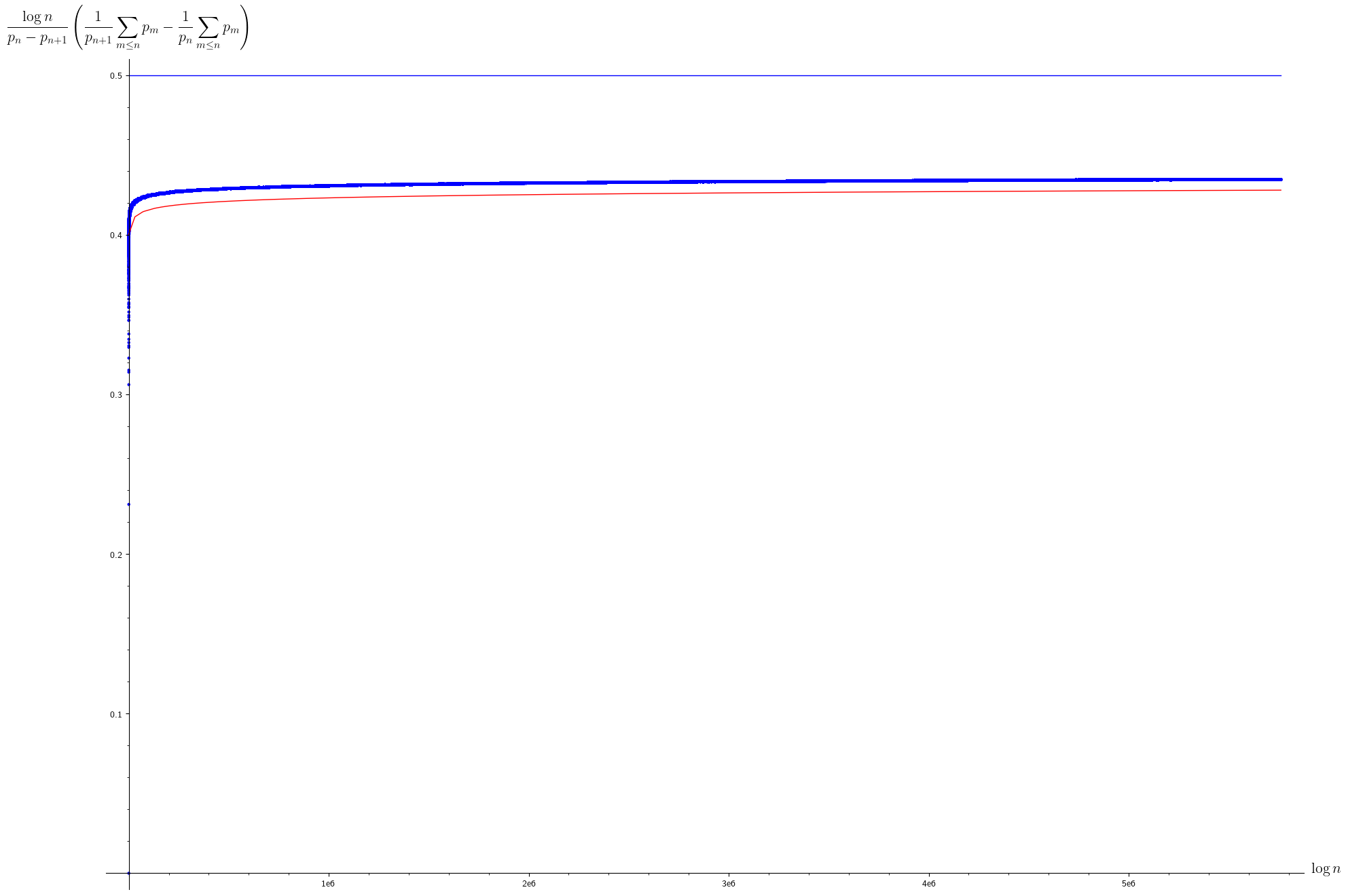}
    \caption{Convergence of \autoref{prop:goestohalf} with asymptotic estimate in red}
    \label{fig:slowconverge}
\end{figure}

We now begin to unravel \autoref{prop:goestohalf} to talk about the structure of \autoref{fig:motivation}. To begin, we see that
\[\log n\left(\frac1{p_{n+1}}\sum_{m\le n}p_m-\frac1{p_n}\sum_{m\le n}p_m\right)\sim\frac{p_n-p_{n+1}}2,\]
so graphing $y=\log n\left(\frac1{p_{n+1}}\sum_{m\le n}p_m-\frac1{p_n}\sum_{m\le n}p_m\right)$ with respect to $x=n$ will look like descending horizontal lines, one corresponding to each prime gap. This won't change if we graph with respect to $x=\log n$; compare \autoref{fig:goestoprimegap}.
\begin{figure}[!htbp]
    \centering
    \includegraphics[width=0.7\textwidth]{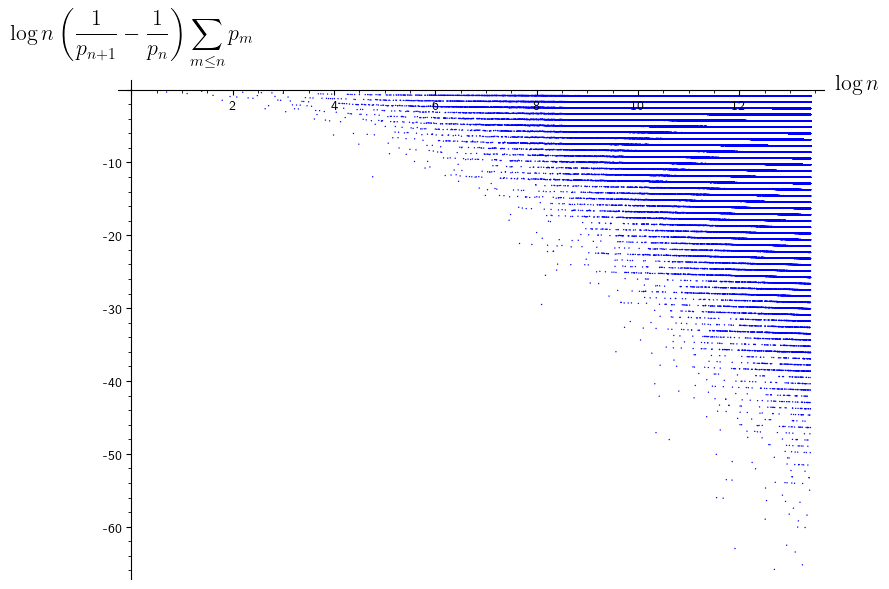}
    \caption{Prime gaps}
    \label{fig:goestoprimegap}
\end{figure}

Continuing, we next take out the $\log n$ factor. Because $y=\log n\left(\frac1{p_{n+1}}\sum_{m\le n}p_m-\frac1{p_n}\sum_{m\le n}p_m\right)$ as a function of $x=\log n$ looks like horizontal lines, graphing $y=\frac1{p_{n+1}}\sum_{m\le n}p_m-\frac1{p_n}\sum_{m\le n}p_m$ with respect to $x=\log n$ will look like
\[xy\sim\frac{p_n-p_{n+1}}2.\]
Equivalently, this looks like
\[y\sim\frac{p_n-p_{n+1}}2\cdot\frac1x.\]
Namely, these are descending hyperbolas, one hyperbola for each prime gap. See \autoref{fig:hyperbolas}.
\begin{figure}[!htbp]
    \centering
    \includegraphics[width=0.7\textwidth]{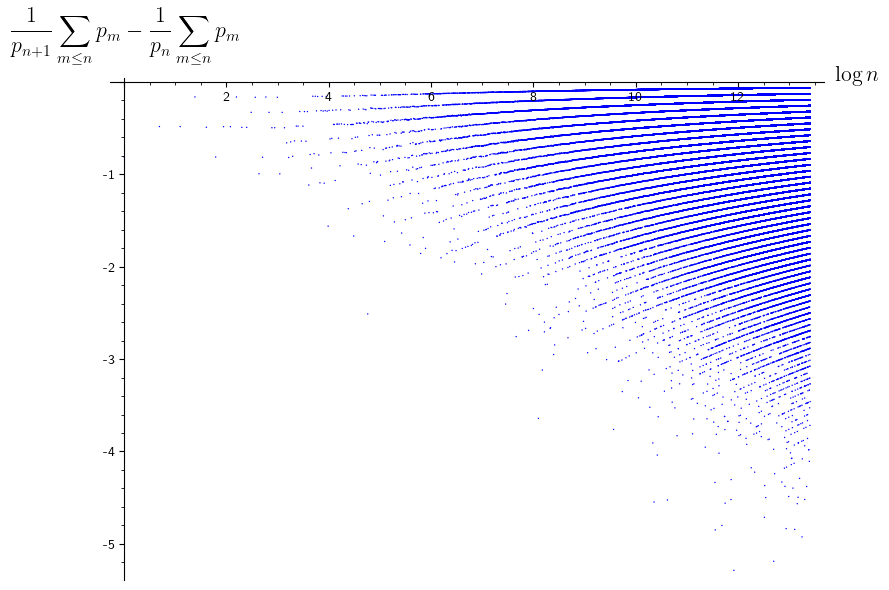}
    \caption{Hyperbolas}
    \label{fig:hyperbolas}
\end{figure}

We now correct the index of $\frac1{p_{n+1}}\sum_{m\le n}p_m$ by simply adding $1=p_{n+1}/p_{n+1}$ to $y.$ This means we want to graph $y=\frac1{p_{n+1}}\sum_{m\le n+1}p_m-\frac1{p_n}\sum_{m\le n}p_m$ as a function of $x=\log n,$ which will look like
\[y=1+\frac{p_n-p_{n+1}}2\cdot\frac1x.\]
This shifts all hyperbolas up by $1,$ giving \autoref{fig:hyperbolasplusone}.
\begin{figure}[!htbp]
    \centering
    \includegraphics[width=0.7\textwidth]{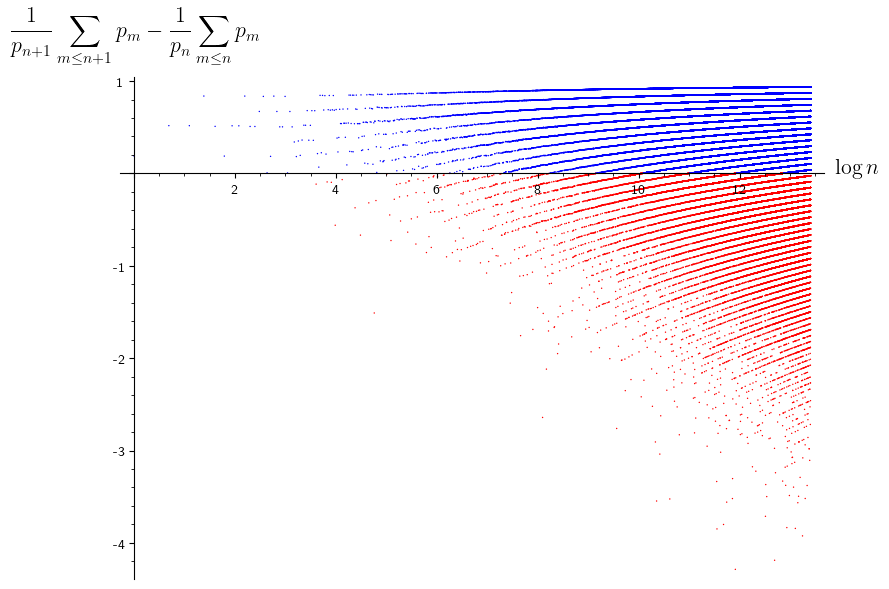}
    \caption{Hyperbolas shifted up by $1,$ with negatives colored red}
    \label{fig:hyperbolasplusone}
\end{figure}

Now, we see that a log-log plot of $\frac1{p_{n+1}}\sum_{m\le n+1}p_m-\frac1{p_n}\sum_{m\le n}p_m$ roughly corresponds to to graphing $y=\log\left|\frac1{p_{n+1}}\sum_{m\le n+1}p_m-\frac1{p_n}\sum_{m\le n}p_m\right|$ with respect to $x=\log n.$ This will look like
\[y=\log\left|1+\frac{p_n-p_{n+1}}2\cdot\frac1x\right|.\]
So we get a ``curtain'' of $\log\left|1-\frac kx\right|$ for each positive integer $k.$ (How clearly this current appears corresponds to how densely a particular prime gap appears.) This is actually what \autoref{fig:motivation}. \autoref{fig:goodloglog} is \autoref{fig:motivation} with the dots colored by sign.
\begin{figure}[!htbp]
    \centering
    \includegraphics[width=0.7\textwidth]{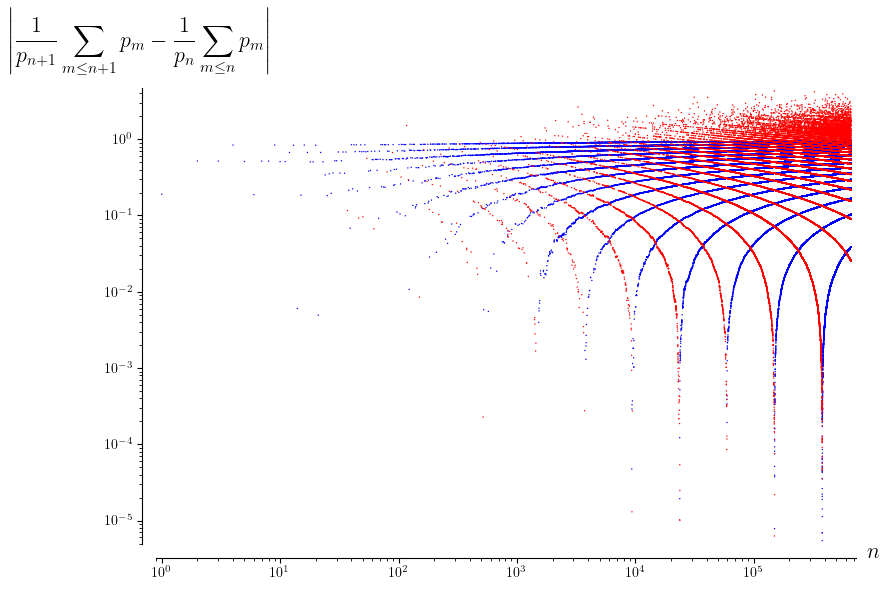}
    \caption{Log-log plot of $\left|\frac1{p_{n+1}}\sum_{m\le n+1}p_m-\frac1{p_n}\sum_{m\le n}p_m\right|,$ with negatives red.}
    \label{fig:goodloglog}
\end{figure}

At this point, we can begin to tease out structure from the graph of \autoref{fig:goodloglog}. For example, all of the positive differences (in blue), are converging to $10^0=1$ because the log-log plot should look like $\log\left|1-\frac kx\right|$ for various positive integers $k,$ and as $x\to\infty$ in such a plot, the value of this goes to $0,$ which is $10^0=1.$

More interestingly, we can predict where the ``troughs'' should be as $n$ grows large. This should occur, roughly speaking, when the graph of
\[y=\log\left|1-\frac{p_{n+1}-p_n}2\cdot\frac1x\right|\]
approaches $-\infty.$ Namely, fixing a particular prime gap $g:=p_{n+1}-p_n,$ we want $1-\frac{g/2}x$ to be roughly $0,$ which corresponds to $\log n=x\approx g/2.$ Thus, we have the following proposition.
\troughs*
\begin{proof}
    This follows from the above discussion.
\end{proof}

We can also discuss the red points above \autoref{fig:goodloglog}.
\begin{prop}
    As $n\to\infty$, the red cluster of points above the y-axis in \autoref{fig:goodloglog} continues to grow without bound.
\end{prop}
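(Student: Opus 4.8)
The plan is to convert the visual description ``red and in the upper cluster'' into an explicit inequality on the prime gap $g_n:=p_{n+1}-p_n$ and then feed it into a classical theorem on large prime gaps. Writing $S_n:=\sum_{m\le n}p_m$ and expanding exactly as in the discussion preceding \autoref{prop:troughs}, the plotted quantity is
\[
Q_n:=\frac1{p_{n+1}}\sum_{m\le n+1}p_m-\frac1{p_n}\sum_{m\le n}p_m=1-\frac{g_n\,S_n}{p_np_{n+1}}.
\]
By \autoref{prop:error} we have $S_n/(p_np_{n+1})=\frac1{2\log n}\left(1+o(1)\right)$, so $Q_n=1-\frac{g_n}{2\log n}\left(1+o(1)\right)$, where the $o(1)$ is a fixed sequence tending to $0$ that does not depend on $g_n$. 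A point is colored red exactly when $Q_n<0$, and it lies in the upper cluster---above the value $1$ that the blue (positive) points approach, i.e. $\log|Q_n|>0$---exactly when $|Q_n|>1$; for a red point the latter means $Q_n<-1$, which unwinds to $g_n>4\log n\,(1+o(1))$.

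Thus the proposition reduces to showing that $g_n>4\log n$ holds for infinitely many $n$, so that the cluster keeps receiving new points as $n\to\infty$, and moreover that the corresponding heights $\log\!\left(\frac{g_n}{2\log n}-1\right)$ are unbounded. Since $p_n\sim n\log n$ gives $\log p_n\sim\log n$, both assertions follow from the classical theorem of Westzynthius that
\[
\limsup_{n\to\infty}\frac{p_{n+1}-p_n}{\log p_n}=\infty.
\]
This produces a sequence $n_1<n_2<\cdots\to\infty$ along which $g_{n_j}/\log n_j\to\infty$; for $j$ large enough we then have $g_{n_j}>4\log n_j\,(1+o(1))$, so there are infinitely many red points in the upper cluster, and along this sequence $\log\!\left(\frac{g_{n_j}}{2\log n_j}-1\right)\to\infty$, giving the unbounded growth.

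The bookkeeping that turns ``red and above $1$'' into the clean condition $g_n>4\log n$ is routine and is essentially already contained in \autoref{prop:error}, so I expect the only genuine input to be the large-gaps theorem. The one point requiring care is that the factor $(1+o(1))$ in the threshold is harmless precisely because the limit superior is infinite: along the chosen subsequence $g_{n_j}/\log n_j$ eventually exceeds any fixed constant by a wide margin, so the fixed error sequence cannot spoil the inequality. It is worth stressing that this appeal to a nontrivial theorem is unavoidable---were the gaps eventually bounded by, say, $3\log n$, the upper red cluster would terminate and fail to grow---so the statement is genuinely equivalent to the fact that prime gaps exceed a fixed multiple of the average infinitely often.
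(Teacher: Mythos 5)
Your proposal is correct and follows essentially the same route as the paper: both expand the plotted quantity as $1-\frac{g_n}{2\log n}(1+o(1))$ via the asymptotic for $\sum_{m\le n}p_m/(p_np_{n+1})$ and then invoke Westzynthius's theorem that $\limsup_{n\to\infty}(p_{n+1}-p_n)/\log n=\infty$ to conclude the red points attain arbitrarily large heights. Your version is in fact slightly more careful than the paper's, since you explicitly translate ``red and above the axis'' into the clean threshold $g_n>4\log n\,(1+o(1))$ before applying the large-gaps theorem.
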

\begin{proof}
    Notice that \[\left|\frac1{p_{n+1}}\sum_{m\le n+1}p_m-\frac1{p_n}\sum_{m\le n}p_m\right|=1+\frac{\sum_{m\leq n}p_m}{p_np_{n+1}}\cdot(p_{n+1}-p_n)\sim \frac{p_{n+1}-p_n}2\cdot\frac1{\log n},\] but it is a known result \parencite{zbMATH02555765} that there are arbitrarily large primes $p$ such that \[p_{n+1}-p_n>c\log n\] for any constant $c$. In other words, \[\limsup_{n\to\infty}\frac{p_{n+1}-p_n}{\log n}=\infty.\] This implies that the difference attains arbitrarily large values, so the red cluster of points in \autoref{fig:goodloglog} has points of arbitrarily large $y$-value.
    
    On a somewhat related note, this also means that there are arbitrarily low/negative values in \autoref{fig:hyperbolasplusone}.
\end{proof}

There is also a related claim we can make about the largest values in \autoref{fig:hyperbolasplusone}, which correspond to the smallest prime gaps. The largest values are always less than one, as \[\frac1{p_{n+1}}\sum_{m\leq n+1}p_m-\frac1{p_n}\sum_{m\leq n}p_m=1-\frac{p_{n+1}-p_n}{p_{n+1}p_n}\cdot\sum_{m\leq n}p_n<1.\] On the other hand, we can show that there are values in \autoref{fig:hyperbolasplusone} that are arbitrarily close to 1, since it is known \parencite{goldston2007primes} that \[\liminf_{n\to\infty}\frac{p_{n+1}-p_n}{\log n}=0.\] In fact, much more is known; it is known now that $\liminf_{n\to\infty}|p_{n+1}-p_n|$ is finite \parencite{zbMATH06302171}.

\section{Comparing with Cram\'er's Model} \label{sec:random}

In this section we study whether the structure in the graphs of the prime sums from earlier can be attributed to the relatively random behavior of the prime numbers, or if the structure is special to the prime numbers.

To this end, our investigation focuses on the following modification of Cram\'er's random model: let $p_1=3$, and let $i=1$. For each odd number $k\geq 5$, starting at $k=5$ and incrementing by two each step, we do both of the following with probability $\frac2{\log k}$ (the factor of two comes from the fact that we are only choosing odd numbers, as we want to preserve the fact that the difference between consecutive terms is even):
\begin{itemize}
    \item Set $p_{i+1}=k$, and
    \item increment $i$ by 1.
\end{itemize}
In this manner, we generate a sequence of odd numbers which have the same asymptotic distribution as the prime numbers (this can be proven by the prime number theorem because we know that $\pi(n)\sim\frac n{\log n}$); in other words, for any interval of length $N$, the expected number of $p_i$ chosen by the procedure described above grows asymptotically at the same rate as the number of primes in the interval, as $N$ grows arbitrarily large.

As an example to compare, \autoref{fig:randomgoodloglog} is the corresponding version of \autoref{fig:goodloglog} for the random model.
\begin{figure}
    \centering
    \includegraphics[width=0.7\textwidth]{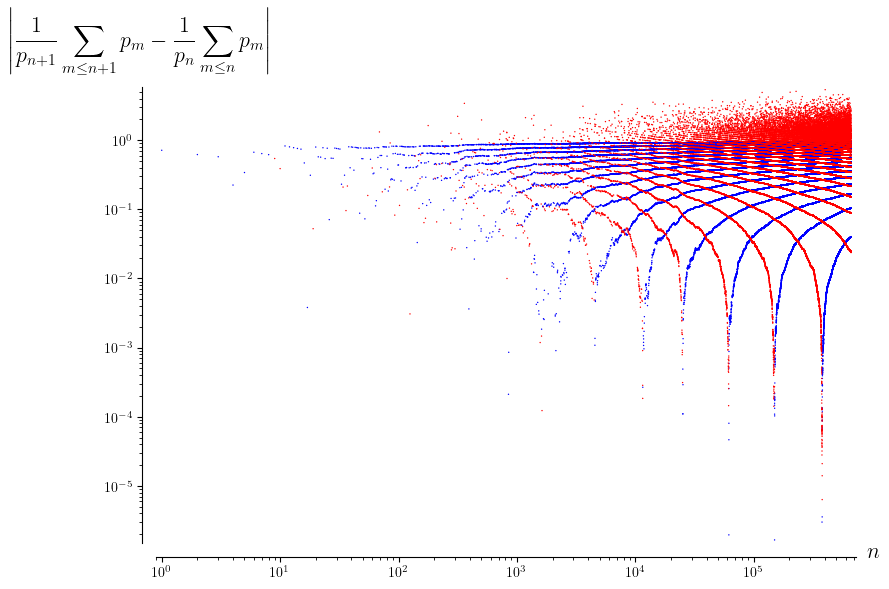}
    \caption{Randomized version of \autoref{fig:goodloglog}}
    \label{fig:randomgoodloglog}
\end{figure}
What is remarkable here is that \autoref{fig:randomgoodloglog} is somehow less stable than the case for primes; this is not a fluke. For example, the error terms which we can theoretically prove from Cram\'er's model in \autoref{thm:cramerpsums} also turn out to have incorrect constant factors in comparison to the actual primes.

The main point of this section is to establish the following result, in analogy with \autoref{prop:error}. We refer the reader to \autoref{sec:cramer} for the proof of the relatively known computation \autoref{thm:cramerpsums}.
\begin{prop} \label{prop:cramer}
    Fix $\varepsilon>0.$ Over all sequences $\mathcal P\subseteq\ZZ$ with probability weighted by Cram\'er's model, we order $\mathcal P=\{p_1,p_2,\ldots\}$ with $p_1<p_2<\cdots.$ Then we have that
    \[\frac{\log n}{p_np_{n+1}}\sum_{m\le n}p_m=\frac{\log n}{\op{Li}^{-1}(n)^2}\int_2^{\op{Li}^{-1}(n)}\frac t{\log t}\,dt+O_\varepsilon\left(x^{-1/2+\varepsilon}\right)\]
    holds almost surely. Here, $\op{Li}$ refers to the logarithmic integral.
\end{prop}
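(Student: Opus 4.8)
The plan is to reduce the entire statement to a single concentration estimate for the Cram\'er counting function and then to propagate that estimate deterministically through partial summation. Write $X_k$ for the independent Bernoulli indicators (over odd $k\ge5$) with $\mathbb E[X_k]=2/\log k$, and let $N(x):=\sum_{5\le k\le x}X_k$ be the number of chosen ``primes'' up to $x$, so that $p_n$ is characterized by $N(p_n)=n$. Because the factor of two compensates for summing over odd $k$ only, one has $\sum_{k\le x}\mathbb E[X_k]=\op{Li}(x)+O\!\left(x/(\log x)^2\right)$, so $\mathbb E[N(x)]=\op{Li}(x)+o(\op{Li}(x))$. The content of the proposition is then that the random quantities $p_n$, $\sum_{m\le n}p_m$, and $p_np_{n+1}$ all lie within $O(x^{-1/2+\varepsilon})$ relative distance of their deterministic smoothings $\op{Li}^{-1}(n)$ and $S(x):=\int_2^x \frac t{\log t}\,dt$, where I read the parameter $x$ in the statement as $x=p_n\sim\op{Li}^{-1}(n)$.

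First I would establish that, almost surely,
\[N(x)=\op{Li}(x)+O_\varepsilon\!\left(x^{1/2+\varepsilon}\right)\qquad\text{for all sufficiently large }x.\]
The variance $\sum_k\operatorname{Var}(X_k)\asymp x/\log x$, so Bernstein's inequality applied to the bounded independent variables $X_k-\mathbb E[X_k]$ produces a tail bound of the shape $\exp\!\left(-c\,x^{2\varepsilon}\log x\right)$ for the event $\lvert N(x)-\mathbb E[N(x)]\rvert>x^{1/2+\varepsilon}$. These probabilities are summable, so Borel--Cantelli yields the bound at every integer $x$; the monotonicity of $N$ then interpolates it to all real $x$ with the same exponent (after shrinking $\varepsilon$ slightly). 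This is the one genuinely probabilistic input, and everything downstream is deterministic on the almost-sure event.

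Next I would invert and sum. Evaluating the counting estimate at $x=p_n$, using $N(p_n)=n$ together with $(\op{Li}^{-1})'(n)=\log\op{Li}^{-1}(n)\asymp\log n$, gives $p_n=\op{Li}^{-1}(n)+O(L^{1/2+\varepsilon})$, where $L:=\op{Li}^{-1}(n)\asymp n\log n$; the same applies to $p_{n+1}$, so $p_np_{n+1}=L^2+O(L^{3/2+\varepsilon})$. For the numerator, partial summation gives
\[\sum_{m\le n}p_m=\int_2^{p_n}t\,dN(t)=p_nN(p_n)-\int_2^{p_n}N(t)\,dt,\]
and substituting $N(t)=\op{Li}(t)+O(t^{1/2+\varepsilon})$ converts the right-hand side into $S(p_n)+O(p_n^{3/2+\varepsilon})$; replacing $p_n$ by $L$ (using $S'(x)=x/\log x$ and $p_n-L=O(L^{1/2+\varepsilon})$) yields $\sum_{m\le n}p_m=S(L)+O(L^{3/2+\varepsilon})$. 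Assembling the ratio, the main term is exactly $\frac{\log n}{L^2}S(L)=\frac{\log n}{\op{Li}^{-1}(n)^2}\int_2^{\op{Li}^{-1}(n)}\frac t{\log t}\,dt$, and since $S(L)\asymp L^2/\log L$, both the numerator error $O(L^{3/2+\varepsilon})$ and the denominator error $O(L^{3/2+\varepsilon})$ contribute relative errors of size $O(L^{-1/2+\varepsilon})$, which with $x\sim L$ is the claimed $O_\varepsilon(x^{-1/2+\varepsilon})$.

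The main obstacle I anticipate is not any single estimate but the bookkeeping required to make the conclusion hold almost surely uniformly in $n$: the Borel--Cantelli step only controls $N$ on a countable set, so care is needed to interpolate to all $x$ (via monotonicity and the slow variation of $\op{Li}$ and $S$) without degrading the exponent, and then to track how the $x^{1/2+\varepsilon}$ error in $N$ feeds through the inversion $n\mapsto p_n$ and through the partial summation into a clean $x^{-1/2+\varepsilon}$ \emph{relative} error in the final ratio. One should also check that the discrepancy between the discrete sum $\sum_{k\le x,\,k\text{ odd}}2/\log k$ and the integral $\op{Li}(x)$ is genuinely absorbed into the stated main term rather than corrupting it.
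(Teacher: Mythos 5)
Your proposal is correct, and it takes a genuinely different route from the paper's. The paper applies its appendix concentration theorem (proved by a $k$-th moment/Chebychev argument) twice --- once with $\alpha=0$ to invert and get $p_n=\op{Li}^{-1}(n)+O_\varepsilon\left(n^{1/2+\varepsilon}\right)$, and once with $\alpha=1$ to estimate $\sum_{m\le n}p_m$ --- and it additionally imports Cram\'er's gap estimate $p_{n+1}-p_n=O\left((\log p_n)^2\right)$ to handle the product $p_np_{n+1}$. You instead make a single probabilistic input, the concentration of the counting function $N(x)=\op{Li}(x)+O_\varepsilon\left(x^{1/2+\varepsilon}\right)$ via Bernstein and Borel--Cantelli, and propagate it deterministically: the inversion gives both $p_n$ and $p_{n+1}$ (so no gap estimate is needed, since $\op{Li}^{-1}(n+1)-\op{Li}^{-1}(n)=O(\log n)$ is absorbed into the error), and Riemann--Stieltjes partial summation $\sum_{m\le n}p_m=p_nN(p_n)-\int_2^{p_n}N(t)\,dt$ replaces the $\alpha=1$ case. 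Your route buys two genuine improvements in rigor: the exponential tails make the ``almost surely, for all large $n$'' conclusion an honest Borel--Cantelli consequence, whereas the paper only ever has probability $1-x^{-2}$ at each fixed $x$ and leaves the passage to an almost-sure statement implicit; and your inversion correctly uses $(\op{Li}^{-1})'\asymp\log$, whereas the paper's corresponding lemma applies its Lipschitz bound for $\op{Li}$ in the wrong direction (since $\op{Li}'=1/\log<1$, the inverse function expands rather than contracts distances), an error that survives only because the lost $\log n$ factor can be hidden in $n^\varepsilon$.

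One estimate in your write-up must be repaired, though, because as stated it would sink your own argument: you assert $\mathbb E[N(x)]=\op{Li}(x)+O\left(x/(\log x)^2\right)$. Bernstein concentrates $N(x)$ only around its mean, so if the mean genuinely deviated from $\op{Li}(x)$ by order $x/(\log x)^2$, the final relative error would be $1/(\log x)^2$ rather than $x^{-1/2+\varepsilon}$, and the proposition would fail along your route. The repair is that the discrepancy is in fact $O(1)$: the sum $\sum_{5\le k\le x,\ k\ \mathrm{odd}}2/\log k$ is a spacing-two midpoint Riemann sum for $\int\frac{dt}{\log t}$, and the correction terms are $O\left(1/(k^2(\log k)^2)\right)$, a convergent series. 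You flag exactly this check in your final paragraph, which is the right instinct; carrying it out with the bound $O(1)$ (anything $\ll x^{1/2}$ suffices) completes the proof.
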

\begin{proof}
    This proof is mostly done by force, converting each term in the sum to its asymptotic according to \autoref{thm:cramerpsums}. As such, we will split this proof into many parts.
    \begin{lem}
        Fix everything as above. Then $p_n=\op{Li}^{-1}(n)+O_\varepsilon\left(n^{1/2+\varepsilon}\right)$ almost surely. In particular, $p_n=O_\varepsilon\left(n^{1+\varepsilon}\right).$
    \end{lem}
    \begin{proof}
        Most of the work here is done by \autoref{thm:cramerpsums}, which after plugging in $x=p_n$ yields
        \[\op{Li}(p_n)=\int_2^{p_n}\frac1{\log t}\,dt=n+O_\varepsilon\left(p_n^{1/2+\varepsilon}\right) \label{eq:pn} \tag{3.1}\]
        with probability $1-p_n^{-2}.$ We would like to take $\op{Li}^{-1}$ everywhere. This function is strictly increasing and, in fact, for real numbers $e<x<y,$ we have
        \[\op{Li}(x)-\op{Li}(y)=\int_y^x\frac1{\log t}\,dt\le x-y.\]
        In particular, we see that $|\op{Li}(x)-\op{Li}(y)|<|x-y|$ for any real numbers $x,y>e.$ Thus, we may use \autoref{eq:pn} to write
        \[\left|p_n-\op{Li}^{-1}(n)\right|<|\op{Li}(p_n)-n|=O_\varepsilon\left(p_n^{1/2+\varepsilon}\right).\]
        So we see that $p_n=\op{Li}^{-1}(n)+O_\varepsilon\left(p_n^{1/2+\varepsilon}\right).$
        
        Now, $n\sim\op{Li}(p_n)$ lets us get $p_n\sim n\log n$ in the same way that the Prime number theorem implies $p_n\sim n\log n,$ so we get $p_n=O\left(n^{1+\varepsilon}\right),$ implying $p_n=\op{Li}^{-1}(n)+O_\varepsilon\left(p_n^{1/2+\varepsilon}\right),$ where perhaps we have to switch out our constant factors.
    \end{proof}
    The above lemma, among other things, gives us relatively easy access to the fraction $\frac1{p_np_{n+1}}.$
    \begin{lem}
        Fix everything as above. We have that $\frac1{p_{n+1}p_n}=\frac1{\op{Li}^{-1}(n)^2}+O_\varepsilon\left(n^{-5/2+\varepsilon}\right)$ almost surely.
    \end{lem}
    \begin{proof}
        We begin by converting $1/(p_{n+1}p_n)$ into $1/p_n^2.$ We see
        \[\frac1{p_{n+1}p_n}=\frac1{p_n^2}-\frac1{p_n}\left(\frac1{p_n}-\frac1{p_{n+1}}\right)=\frac1{p_n^2}-\frac{p_{n+1}-p_n}{p_n^2p_{n+1}}.\]
        It is known \parencite{Cramr1936} that Cram\'er's model gives an estimate of $p_{n+1}-p_n=O\left((\log p_n)^2\right),$ so the error term here comes out to $O\left(p_n^{-3}(\log p_n)^2\right)=O_\varepsilon\left(n^{-3+\varepsilon}\right)$ by readjusting $\varepsilon$'s constant factors as necessary.
        
        We now convert $1/p_n^2$ to $1/\op{Li}^{-1}(n)^2.$ For this, we write
        \[\left|\frac1{p_n^2}-\frac1{\op{Li}^{-1}(n)^2}\right|=\left|\frac1{p_n}+\frac1{\op{Li}^{-1}(n)}\right|\cdot\left|\frac1{p_n}-\frac1{\op{Li}^{-1}(n)}\right|.\]
        The first term is the sum of two fractions, both of whose denominators are bigger than $n$ with probability $1-n^{-2},$ so we may bounds this as $O(1/n).$ We can estimate the second term as
        \[\frac{\left|\op{Li}^{-1}(n)-p_n\right|}{p_n\op{Li}^{-1}(n)}.\]
        The numerator is $O_\varepsilon\left(n^{1/2+\varepsilon}\right)$ while the denominator is $O\left(1/n^2\right),$ so we accumulate $O_\varepsilon\left(n^{-3/2+\varepsilon}\right)$ here. Combining the two terms, we get $O_\varepsilon\left(n^{-1}\cdot n^{-3/2+\varepsilon}\right),$ which is what we wanted.
    \end{proof}
    Lastly, we turn the sum into an integral.
    \begin{lem}
        Fix everything as above. We have that
        \[\sum_{m\le n}p_m=\int_2^{\op{Li}^{-1}(n)}\frac t{\log t}\,dt+O_\varepsilon\left(x^{3/2+\varepsilon}\right)\]
        almost surely.
    \end{lem}
    \begin{proof}
        Plugging in $p_n$ into \autoref{thm:cramerpsums}, we get that
        \[\sum_{m\le n}p_m=\int_2^{p_n}\frac{t}{\log t}\,dt+O_\varepsilon\left(p_n^{3/2+\varepsilon}\right)\]
        with probability $1-p_n^{-2}>1-n^{-2}.$ Plugging in our asymptotic $p_n=\op{Li}^{-1}(n)+O_\varepsilon\left(n^{1/2+\varepsilon}\right)$ (which holds in the same situation by partial summation), we get
        \[\int_2^{p_n}\frac t{\log t}\,dt=\int_2^{\op{Li}^{-1}(n)}\frac t{\log t}\,dt+O_\varepsilon\left(n^{1/2+\varepsilon}\cdot\frac{p_n}{\log p_n}\right).\]
        Now, the error term here will collapse into $O_\varepsilon\left(n^{3/2+\varepsilon}\right)$ once we use $p_n=O\left(n^{1+\varepsilon}\right)$ and readjust the constant factors. Combining our error terms finishes.
    \end{proof}
    We are now ready to prove the proposition. To begin, we write
    \[\sum_{m\le n}p_m=\int_2^{\op{Li}^{-1}(n)}\frac t{\log t}\,dt+O_\varepsilon\left(x^{3/2+\varepsilon}\right)\]
    and multiply both sides by $1/(p_np_{n+1})=1/\op{Li}^{-1}(n)^2+O_\varepsilon\left(n^{-5/2+\varepsilon}\right)$ to get
    \[\frac1{p_np_{n+1}}\sum_{m\le n}p_m=\frac1{\op{Li}^{-1}(n)^2}\int_2^{\op{Li}^{-1}(n)}\frac t{\log t}\,dt+O_\varepsilon\left(n^{-5/2+\varepsilon}\cdot\op{Li}^{-1}(n)^2\right).\]
    The error term here becomes $O_\varepsilon\left(n^{-1/2}\right)$ after adjusting constant factors, and we get the result after multiplying both sides by $\log n.$
\end{proof}

\begin{remark}
    One of the costs of the small error term $O_\varepsilon\left(x^{-1/2+\varepsilon}\right)$ is that the actual main term here is relatively unreadable. For example, it is no longer immediately obvious that the limit here is $1/2.$
\end{remark}
The point of the above result is that we are able to describe the behavior of the sum from \autoref{prop:goestohalf} with very high accuracy---on the order of $n^{-1/2}$---using Cram\'er's model. This is more accuracy than we could previously achieve, yet its constant factors still look surprisingly off, as in \autoref{fig:randomgoodloglog}.

\section{Generalization to Gaussian Integers} \label{sec:gaussian}

\subsection{A Motivating Prime Sum}

We now move the study of prime sums to primes in $\ZZ[i].$ One problem here is that there is no truly standard way to choose one prime while discarding its associates in the same way that we choose the positive primes in $\ZZ.$ This is a real problem because it prevents us from, for example, usefully summing
\[\sum_{\op N(\pi)\le x}\pi=0,\]
where the sum is over irreducible elements $\pi\in\ZZ[i].$ Indeed, for every prime $\pi$ with $\op N(\pi)\le x,$ we have that $-\pi$ is a distinct element which is also a prime of norm $\le x,$ so we in total sum to $0$ after constructing pairs $(\pi,-\pi).$ In fact, we still can't even usefully sum
\[\sum_{\op N(\pi)\le x}\pi^2=0\]
because, for each prime $\pi\in\ZZ[i]$ with $\op N(\pi),$ we see that $i\pi$ is another distinct prime of norm $\le x.$ In the same way as before, we can create pairs of primes $(\pi,i\pi)$ (even though $\pi\mapsto i\pi$ is not involutive, it is invertible), in total still summing to $\pi^2+(i\pi)^2=0.$

However, once we take fourth powers, then the sum becomes more interesting: the associates of a prime $\pi\in\ZZ[i]$ look like $\{\pi,i\pi,-\pi,-i\pi\},$ which all give the same fourth power, so there is no ``trivial'' cancellation. However, it turns out that there is still some (nontrivial) global cancellation, no matter what our exponent is. The main result of this section is to show the following.
\gaussianthm*
\noindent This statement essentially says that there is nontrivial cancellation among the $\pi^n/\op N(\pi)^{n/2}.$ In particular, the prime number theorem for $\ZZ[i]$ implies that
\[\sum_{\op N(\pi)\le x}1=\Theta\left(\frac x{\log x}\right),\]
so to get $o(x/\log x),$ there must be some cancellation.

We have to build some machinery to prove \autoref{thm:gaussian}. We take the following definition.
\begin{defi}[Angle of a Gaussian integer]
    Let $\alpha=a+bi\in\ZZ[i]\setminus\{0\}$ be a nonzero Gaussian integer. Then we define the angle of $\alpha$ to be the angle $\theta_\alpha\in[0,2\pi)$ such that $\alpha=\sqrt{\op N(\alpha)}e^{i\theta_\alpha}.$
\end{defi}
The main intuition for \autoref{thm:gaussian} is that we are summing
\[\frac{\pi^n}{\op N(\pi)^{n/2}}=e^{ni\theta_\pi},\]
and because we expect $\theta_\pi$ to roughly be a random angle, we expect there to be a fairly large amount of cancellation in the ``random walk.'' This intuition plus a technical result is convertible into the result.
\begin{proof}[Proof of \autoref{thm:gaussian}]
    We begin by showing that the sum is actually real. Essentially, the idea is that, if $\pi$ is a prime of bounded norm, then $\overline\pi$ also has the same bounded norm and will cancel out any imaginary contribution. To make this rigorous, we divide the sum up into the pieces
    \[\sum_{\op N(\pi)\le x}\frac{\pi^n}{\op N(\pi)^{n/2}}=\sum_{\substack{\op N(\pi)\le x\\\op{Im}\pi>0}}\frac{\pi^n}{\op N(\pi)^{n/2}}+\sum_{\substack{\op N(\pi)\le x\\\op{Im}\pi=0}}\frac{\pi^n}{\op N(\pi)^{n/2}}+\sum_{\substack{\op N(\pi)\le x\\\op{Im}\pi<0}}\frac{\pi^n}{\op N(\pi)^{n/2}}.\]
    The middle sum is entirely real, so we won't pay more attention to it. Further, notice that for every prime $\pi$ with $\op{Im}\pi>0$ and $\op N(\overline\pi)=\op N(\pi)\le x,$ we have $\overline\pi$ has $\op{Im}\overline\pi<0$ and still $\op N(\pi)\le x.$ This map lets us pair each $\pi$ in the first sum with $\overline\pi$ living in the second. Thus,
    \[\sum_{\op N(\pi)\le x}\frac{\pi^n}{\op N(\pi)^{n/2}}=\sum_{\substack{\op N(\pi)\le x\\\op{Im}\pi>0}}\left(\frac{\pi^n+\overline\pi^n}{\op N(\pi)^{n/2}}\right)+\sum_{\substack{\op N(\pi)\le x\\\op{Im}\pi=0}}\frac{\pi^n}{\op N(\pi)^{n/2}}.\]
    Now, $\pi^n+\overline\pi^n$ is self-conjugate and hence real. Thus, the entire sum is real, finishing this part of the proof.
    
    We now turn to bounding the sum. As suggested by our intuition earlier, we write $\pi=\sqrt{\op N(\pi)}e^{i\theta_\pi}$ for each $\pi,$ meaning that we want to bound
    \[S:=\sum_{\op N(\pi)\le x}\frac{\pi^n}{\op N(\pi)^{n/2}}=\sum_{\op N(\pi)\le x}e^{ni\theta_\pi}.\]
    For peace of mind, we note that we already know this sum is real, so it suffices to take real parts and estimate
    \[S=\sum_{\op N(\pi)\le x}\cos(n\theta_\pi). \label{eq:cossum} \tag{4.1}\]
    The idea, now, is to split up the various $\theta_\pi$ into small sectors and pretend that $\cos$ is constant over each sector. To make this work however, we need the technical Theorem 4 from \parencite{Kovalchik1976}.
    \begin{lem}[Koval{\textquotesingle}chik] \label{lem:technical}
        Let $x$ be a real number going to infinity, and let $S$ be a sector of angle at least $\Theta^{-1/4+\varepsilon}$ for some $\varepsilon>0.$ Then
        \[\sum_{\substack{\op N(\pi)\le x\\\theta_\pi\in S}}1=\frac\Theta{2\pi}\cdot8\op{Li}(x)\left(1+O\left(\frac1{(\log x)^B}\right)\right)\]
        for any arbitrary large constant $B.$
    \end{lem}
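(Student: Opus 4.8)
The plan is to prove this equidistribution statement by Fourier analysis in the angle, converting the sector count into sums of Hecke Grössencharaktere of $\ZZ[i]$ and then feeding in analytic information about the associated $L$-functions. The relevant characters are the \emph{angular} characters $\xi_m$, defined on nonzero ideals by $\xi_m(\mathfrak a)=(\alpha/|\alpha|)^{4m}=e^{4im\theta_\alpha}$ for any generator $\alpha$ of $\mathfrak a$; this is well defined because replacing $\alpha$ by an associate multiplies $\alpha/|\alpha|$ by a fourth root of unity, which the exponent $4m$ kills. First I would record the combinatorial reduction: multiplication by the units $\{1,i,-1,-i\}$ rotates the angle of a Gaussian prime by multiples of $\pi/2$ and permutes prime elements, so for a sector $S$ of width $\Theta\le\pi/2$ the number of prime \emph{elements} with $\theta_\pi\in S$ equals---up to the two negligible families of the ramified prime $1+i$ and the degree-two inert primes, which number $O(\sqrt x/\log x)$---the number of degree-one prime \emph{ideals} whose angle, reduced modulo $\pi/2$, lands in the projection of $S$ (wider sectors split additively). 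This lets me work entirely with ideals and with the characters $\xi_m$, which have period $\pi/2$ in the angle and whose $L$-functions $L(s,\xi_m)=\prod_{\mathfrak p}\bigl(1-\xi_m(\mathfrak p)\op N(\mathfrak p)^{-s}\bigr)^{-1}$ are entire for $m\ne 0$, satisfying a functional equation with analytic conductor $\asymp(|m|+2)^2(|t|+2)^2$.

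Next I would detect the reduced sector. Approximating its indicator function from above and below by Beurling--Selberg trigonometric polynomials in $e^{4i\theta}$ of degree $K$, whose coefficients $c_m$ satisfy $|c_m|\ll\min(\Theta,1/|m|)$ and whose zeroth coefficient reproduces the mean of the indicator up to $O(1/K)$, I would write the sector count as $\sum_{|m|\le K}c_m\sum_{\op N(\mathfrak p)\le x}\xi_m(\mathfrak p)$ plus a truncation error of size $O(K^{-1}\cdot x/\log x)$. The term $m=0$ is handled by the ordinary prime ideal theorem for $\ZZ[i]$ (equivalently by the factorization $\zeta_{\ZZ[i]}=\zeta\cdot L(\,\cdot\,,\chi_{-4})$ and its classical zero-free region), and contributes $c_0$ times the ideal count, which is $\frac{\Theta}{2\pi}$ times the total number of Gaussian primes of norm at most $x$---the stated main term. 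Everything then reduces to bounding the twisted prime sums $\sum_{\op N(\mathfrak p)\le x}\xi_m(\mathfrak p)$ for $1\le|m|\le K$.

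The heart of the matter is the range of $m$ over which these twisted prime sums genuinely cancel, since this controls how thin the sector may be. A single-character argument---shifting the explicit formula for $\psi(x,\xi_m)$ into the de la Vallée Poussin zero-free region $\sigma\ge 1-c/\log((|m|+2)(|t|+2))$---yields only $\sum_{\op N(\mathfrak p)\le x}\xi_m(\mathfrak p)\ll x\exp(-c\sqrt{\log x})$, which suffices for sectors of width down to roughly $\exp(-c\sqrt{\log x})$ but nowhere near $x^{-1/4+\varepsilon}$: at that width the main term is only of size $x^{3/4+\varepsilon}/\log x$, far below such an error. To reach $x^{-1/4+\varepsilon}$ one must instead exploit cancellation \emph{on average over the family} $\{\xi_m\}_{|m|\le K}$, via a Linnik-type zero-density estimate for the Hecke $L$-functions, $\sum_{|m|\le K}N(\sigma,T,\xi_m)\ll (K^2T)^{\lambda(1-\sigma)}(\log KT)^{O(1)}$; integrating this against the explicit formula produces a power saving $\sum_{|m|\le K}\bigl|\sum_{\op N(\mathfrak p)\le x}\xi_m(\mathfrak p)\bigr|\ll x^{1-\delta}$, and balancing the density exponent $\lambda$ against the Fourier degree $K\asymp (\log x)^B/\Theta$ is exactly what produces the threshold $\Theta\ge x^{-1/4+\varepsilon}$. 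A convenient feature is that $\xi_m$ is a genuinely complex character for every $m\ne0$, so there are no exceptional real zeros to excise, and the only Siegel-type zero lives in the fixed factor $L(\,\cdot\,,\chi_{-4})$ of the $m=0$ term, where it is harmless.

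The main obstacle is thus squarely the uniform zero control for the family $\{L(s,\xi_m)\}$: establishing, or extracting from \parencite{Kovalchik1976}, a zero-density estimate strong enough (together with the requisite uniform zero-free region) to push the admissible sector width all the way to $x^{-1/4+\varepsilon}$. By contrast the Fourier approximation, the symmetry bookkeeping relating prime elements to prime ideals, and the extraction of the main term from $\xi_0$ are all routine, and I would carry them out in that order, isolating the density estimate as the one genuinely analytic input. Once it is in hand, summing the power-saving bound against the coefficients $c_m$ and collecting it with the truncation error $O(K^{-1}x/\log x)$ gives a total error of relative size $O((\log x)^{-B})$ for every $B$, as claimed.
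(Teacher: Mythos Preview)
The paper does not prove this lemma; it is simply quoted as Theorem~4 of \parencite{Kovalchik1976} and invoked as a black box inside the proof of \autoref{thm:gaussian}. There is therefore no proof in the paper to compare your proposal against.

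That said, your sketch is a faithful outline of how such sector equidistribution theorems are actually established (and, presumably, of what Koval'chik does): expand the sector indicator in the angular Hecke characters $\xi_m(\mathfrak a)=e^{4im\theta_\alpha}$ via a Beurling--Selberg majorant/minorant, pick off the main term from $m=0$ using the prime ideal theorem for $\ZZ[i]$, and bound the twisted prime sums for $1\le|m|\le K$ by feeding a zero-density estimate for the family $\{L(s,\xi_m)\}$ into the explicit formula. You correctly identify that a single-character zero-free region argument only reaches sectors of width $\exp(-c\sqrt{\log x})$, that getting down to a negative power of $x$ genuinely requires density input, and that no exceptional real zeros enter for $m\ne0$ since those characters are nonreal. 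The one soft spot is the sentence where ``balancing the density exponent $\lambda$ against $K$'' is said to produce exactly the threshold $x^{-1/4+\varepsilon}$: the specific constant $1/4$ depends on the precise zero-density theorem one has for Hecke $L$-functions over $\ZZ[i]$, and you have not stated one, so at that step you are in effect citing Koval'chik (or an equivalent source) just as the paper does.
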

    We now set $N=\floor{x^{1/3}}>1$ and define the $|n|N$ sectors
    \[S_k:=\left[\frac{2\pi k}{|n|N},\frac{2\pi(k+1)}{|n|N}\right),\]
    where $k$ varies from $1$ to $|n|N.$ We note that each sector has angle exceeding $x^{-1/4+\varepsilon_0},$ so these sectors are fair play for \autoref{lem:technical}.
    
    Anyways, we take \autoref{eq:cossum} and divide up by our sectors so that we are bounding
    \[S=\sum_{k=1}^{|n|N}\sum_{\substack{\op N(\pi)\le x\\\theta_\pi\in S_k}}\cos(|n|\theta_\pi).\]
    Now, fixing one sector $S_k,$ we see $n\theta_\pi\in[2\pi k/N,2\pi(k+1)/N),$ so we have that
    \[\cos(n\theta_\pi)\in\left[\cos\frac{2\pi k}N,\cos\frac{2\pi(k+1)}N\right).\]
    This means that our sum turns into
    \[S=\sum_{k=1}^{|n|N}\sum_{\substack{\op N(\pi)\le x\\\theta_\pi\in S_k}}\left[\cos\frac{2\pi k}N+O\left(\left|\cos\frac{2\pi(k+1)}N-\cos\frac{2\pi k}N\right|\right)\right] \label{eq:split} \tag{4.2}.\]
    The error term here comes out to $-2\sin\left(\frac\pi N\right)\sin\left(\frac\pi N(2x+1)\right)$ using a sum-to-product formula; the estimate $\sin(\theta)<\theta$ for $\theta>0$ is enough to conclude that the error is $O(1/N),$ which is good enough for our purposes.
    
    Now, the terms in \autoref{eq:split} do not actually depend on the individual prime, so we may just count the number of primes by using \autoref{lem:technical} to get
    \[S=\sum_{k=1}^{nN}\frac{8\op{Li}(x)}{nN}\left(1+O\left(\frac1{(\log x)^A}\right)\right)\left(\cos\frac{2\pi k}N+O\left(\frac1N\right)\right),\]
    where $A$ is an arbitrarily large constant. Expanding this directly, we get that
    \[S=\sum_{k=1}^{nN}\left[\frac{8\op{Li}(x)}{nN}\cos\frac{2\pi k}N+O\left(\frac{8\op{Li}(x)}{nN(\log x)^A}\cos\frac{2\pi k}N\right)+O\left(\frac{8\op{Li}(x)}{nN^2}\right)+O\left(\frac{8\op{Li}(x)}{nN^2(\log x)^A}\right)\right].\]
    The first term will vanish when summed. As for the remaining error terms, we sum over $k$ and are left with
    \[S=O\left(\frac{\op{Li}(x)}{(\log x)^A}\right)+O\left(\frac{\op{Li}(x)}{N}\right)+O\left(\frac{\op{Li}(x)}{N(\log x)^A}\right).\]
    The new first error term is the main term to worry about because $N\sim x^{1/3}$ is large. This leave us with $O\left(x/(\log x)^{A+1}\right)$ because $\op{Li}(x)\sim x/\log x.$ We set $B=A+1$ to finish the proof.
\end{proof}

Looking at the above proof, we note that there is really no need to restrict $n$ to be a nonzero integer. Using the reframing with angles of Gaussian integers, the core of the above proof lies in the following result, which is proven in the same way as above.
\begin{prop} \label{prop:primwalk}
    Fix $y$ a nonzero real number. Then, for any real number $x,$
    \[\sum_{\op N(\pi)\le x}e^{iy\theta_\pi}=O\left(\frac{x}{(\log x)^A}\right),\]
    where $A$ is an arbitrarily large (but fixed) constant. Here, the sum is taken over Gaussian primes $\pi\in\ZZ[i]$ with $\op N(\pi)\le x$; the sum is a real number.
\end{prop}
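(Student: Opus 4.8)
The plan is to run the sector argument from the proof of \autoref{thm:gaussian} essentially verbatim, now carrying the integrand $e^{iy\theta_\pi}$ in place of $e^{ni\theta_\pi}$. First I would set $N=\floor{x^{1/3}}$ and partition the circle into the $\lceil|y|N\rceil$ equal sectors $S_k=[2\pi k/(|y|N),2\pi(k+1)/(|y|N))$, each of angle comfortably exceeding $x^{-1/4+\varepsilon_0}$, so that \autoref{lem:technical} applies to every one of them. On $S_k$ the quantity $y\theta_\pi$ ranges over an interval of length $2\pi/N$, so I would replace $e^{iy\theta_\pi}$ by its value $e^{2\pi ik/N}$ at the left endpoint, incurring a per-prime error of size $O(1/N)$ exactly as in the sum-to-product estimate used for \autoref{thm:gaussian}. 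Counting the primes in each sector via \autoref{lem:technical} then converts the sum into
\[S=\frac{8\op{Li}(x)}{|y|N}\sum_{k}e^{2\pi ik/N}+(\text{error terms}),\]
where the error terms are the same $O(\op{Li}(x)/(\log x)^A)$ and $O(\op{Li}(x)/N)$ contributions that appear in \autoref{thm:gaussian}.

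The step I expect to be the main obstacle — and the crucial point of departure from the integer case — is the behaviour of the leading geometric sum $\sum_k e^{2\pi ik/N}$. In \autoref{thm:gaussian} the number of sectors is exactly $|n|N$, an integer multiple of $N$, so the sum ranges over $|n|$ complete periods of the $N$th roots of unity and vanishes identically; it is precisely this exact cancellation that upgrades the bound from $x/\log x$ down to $x/(\log x)^A$. For non-integer $y$ this collapse fails: the number of sectors $\lceil|y|N\rceil$ is not a multiple of $N$, the geometric series telescopes only to $\frac{e^{2\pi iy}-1}{e^{2\pi iy/N}-1}=O_y(N)$, and after multiplying by $\frac{8\op{Li}(x)}{|y|N}$ one is left with a genuine main term of size $\asymp\op{Li}(x)\asymp x/\log x$.

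I therefore expect the method to succeed exactly when $e^{2\pi iy}=1$, i.e.\ when $y$ is a nonzero integer; in that regime $e^{iy\theta_\pi}=\pi^y/\op{N}(\pi)^{y/2}$, so the assertion is literally \autoref{thm:gaussian} and nothing new is required. For genuinely non-integer $y$ I do not expect the stated bound to hold, and in fact the quantitative equidistribution of \autoref{lem:technical} already pins down the leading term:
\[S=\frac{8\op{Li}(x)}{2\pi}\int_0^{2\pi}e^{iy\theta}\,d\theta\,(1+o(1))=\frac{8\op{Li}(x)}{2\pi}\cdot\frac{e^{2\pi iy}-1}{iy}\,(1+o(1)),\]
which is $\Theta(x/\log x)$ whenever $y\notin\ZZ$. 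The same computation also contradicts the final sentence of the statement: since conjugation sends $\theta_\pi\mapsto2\pi-\theta_\pi$, reality of $S$ would force $\overline S=e^{-2\pi iy}S=S$, hence $S=0$, which is false once $S=\Theta(x/\log x)$. The honest conclusion of my plan is thus that \autoref{prop:primwalk} is the correct generalization only when $y$ is read as a nonzero \emph{integer}, where it reduces to \autoref{thm:gaussian}; the sector method cannot deliver—and the statement cannot be true for—arbitrary real $y$.
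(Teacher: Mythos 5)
You are right, and what you have uncovered is a defect in the paper rather than a gap in your own argument. The paper offers no independent proof of \autoref{prop:primwalk}: it merely asserts the result ``is proven in the same way as'' \autoref{thm:gaussian}. Running that argument honestly, as you did, shows it does not survive the generalization. The entire source of cancellation in \autoref{thm:gaussian} is that the number of sectors $|n|N$ is an integer multiple of $N$, so the main term $\sum_{k=1}^{|n|N}\cos(2\pi k/N)$ sums a root of unity over complete periods and vanishes identically; for $y\notin\ZZ$ this collapse fails, and the quantitative equidistribution of \autoref{lem:technical} instead pins down
\[\sum_{\op N(\pi)\le x}e^{iy\theta_\pi}=\frac{8\op{Li}(x)}{2\pi}\cdot\frac{e^{2\pi iy}-1}{iy}\bigl(1+o(1)\bigr),\]
whose modulus is $\Theta_y(x/\log x)$. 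Moreover $\op{Im}\bigl(\frac{e^{2\pi iy}-1}{iy}\bigr)=\frac{1-\cos(2\pi y)}{y}\ne0$ for $y\notin\ZZ$, so the sum is not even real. Both assertions of the proposition therefore fail for non-integer $y$, and the statement is correct exactly when $y$ is a nonzero integer, in which case $e^{iy\theta_\pi}=\pi^y/\op N(\pi)^{y/2}$ and it is literally \autoref{thm:gaussian}. (The conjecture at the end of the section, which also allows arbitrary real $y$, inherits the same problem.)

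One small repair to your reality argument: conjugation satisfies $\theta_{\overline\pi}=2\pi-\theta_\pi$ only when $\theta_\pi\ne0$, while the primes on the positive real axis (the rational primes $q\equiv3\pmod4$ with $q\le\sqrt x$) are fixed by conjugation. The exact identity is $\overline S=e^{-2\pi iy}(S-S_0)+S_0$, where $S_0\asymp\sqrt x/\log x$ counts those axis primes, so reality of $S$ would force $S=S_0=O(\sqrt x/\log x)$ rather than $S=0$. This still contradicts $|S|=\Theta_y(x/\log x)$, so your conclusion stands; alternatively, non-reality follows at once from the nonzero imaginary part of the main term displayed above, with no need for the conjugation symmetry at all.
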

\begin{remark}
    It is probably possible to generalize \autoref{prop:primwalk} further, allowing $y$ to vary with $x,$ though the authors have not worked this out explicitly.
\end{remark}
\begin{remark}
    Comparing \autoref{prop:primwalk} with Weyl's equidistribution criterion, it is unsurprising that we were forced to appeal to an equidistribution result like \autoref{lem:technical} in order to achieve \autoref{prop:primwalk}.
\end{remark}

We close this section by actually computing the case of sums of fourth powers, which follows quickly from \autoref{thm:gaussian}.
\begin{cor} \label{prop:gaussianfourth}
    For real numbers $x,$ we have that
    \[\sum_{\op N(\pi)\le x}\pi^4=O\left(\frac{x^3}{(\log x)^A}\right),\]
    where $A$ is an arbitrarily large constant.
\end{cor}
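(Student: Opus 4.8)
The plan is to deduce this directly from \autoref{thm:gaussian} in the case $n=4$ by partial summation. The key observation is that, writing each prime as $\pi=\sqrt{\op N(\pi)}\,e^{i\theta_\pi}$, we have
\[\pi^4=\op N(\pi)^2e^{4i\theta_\pi}=\op N(\pi)^2\cdot\frac{\pi^4}{\op N(\pi)^2}.\]
Thus the sum we want differs from the cancellation-exhibiting sum of \autoref{thm:gaussian} only by the smooth weight $\op N(\pi)^2$, and partial summation will let us transfer the saving of $(\log x)^A$ from the normalized sum to the unnormalized one.

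Concretely, I would set
\[S(t):=\sum_{\op N(\pi)\le t}\frac{\pi^4}{\op N(\pi)^2}=\sum_{\op N(\pi)\le t}e^{4i\theta_\pi},\]
which by \autoref{thm:gaussian} (taking $n=4$) satisfies $S(t)=O\!\left(t/(\log t)^A\right)$ for arbitrarily large fixed $A$, and which vanishes for $t<2$. Applying Abel summation with the weight $w(t)=t^2$ then gives
\[\sum_{\op N(\pi)\le x}\pi^4=\sum_{\op N(\pi)\le x}\op N(\pi)^2\,e^{4i\theta_\pi}=x^2S(x)-\int_2^x2t\,S(t)\,dt.\]

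It remains only to bound the two pieces. The boundary term is $x^2S(x)=O\!\left(x^3/(\log x)^A\right)$ immediately. For the integral, substituting $S(t)=O\!\left(t/(\log t)^A\right)$ produces an integrand of size $O\!\left(t^2/(\log t)^A\right)$, and the routine estimate $\int_2^xt^2(\log t)^{-A}\,dt=O\!\left(x^3/(\log x)^A\right)$ (the integrand is eventually increasing, so the integral is governed by its value near $t=x$) finishes the bound. Both contributions are $O\!\left(x^3/(\log x)^A\right)$, which is exactly the claim.

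I do not expect any serious obstacle here. The only point requiring minor care is the bookkeeping in the partial summation, since the ``index'' $\op N(\pi)$ ranges over the norms of Gaussian primes (namely $2$, rational primes $p\equiv 1\pmod 4$, and squares $p^2$ for $p\equiv 3\pmod 4$) rather than over all integers. This does not affect the Riemann--Stieltjes form of Abel summation at all, so the argument goes through verbatim once one treats $S$ as the jump function it is. Since $A$ is arbitrary throughout, there is no loss in the power of the logarithm, and the same method would evidently give $O\!\left(x^{n/2+1}/(\log x)^A\right)$ for $\sum_{\op N(\pi)\le x}\pi^n$ with any even $n$.
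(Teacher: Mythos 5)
Your proposal is correct and follows essentially the same route as the paper: apply \autoref{thm:gaussian} with $n=4$ and then transfer the weight $\op N(\pi)^2$ by Abel summation, bounding the boundary term $x^2S(x)$ and the integral $\int_2^x 2t\,S(t)\,dt$ separately, each by $O\left(x^3/(\log x)^A\right)$. The only difference is cosmetic (you phrase the partial summation via the jump function $S(t)$ rather than grouping terms by norm), and your bound on $\int_2^x t^2(\log t)^{-A}\,dt$ is a valid justification of the step the paper handles with a brief parenthetical remark.
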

\begin{proof}
    By \autoref{thm:gaussian}, we have that
    \[\sum_{\op N(\pi)\le x}\frac{\pi^4}{\op N(\pi)^2}=O\left(\frac{x}{(\log x)^A}\right).\]
    In order to use partial summation, we rewrite the desired sum as
    \[\sum_{2\le k\le x}\left(\sum_{\op N(\pi)=k}\pi^4\right).\]
    Partial summation turns this into
    \[\sum_{2\le k\le x}\left(\sum_{\op N(\pi)=k}\frac{\pi^4}{\op N(\pi)^2}\cdot k^2\right)=x^2O\left(\frac{x}{(\log x)^A}\right)+O\left(\int_2^x\frac{t\cdot2t}{(\log t)^A}\,dt\right),\]
    for any arbitrarily large constant $A.$ Both terms are $O\left(x^3/(\log x)^A\right),$ which finishes. (The second term is safe because the maximum value of the integral is $x^2/\log x,$ for $x$ sufficiently large.)
\end{proof}

\subsection{A Random Model for Gaussian Integers}
Motivated by the previous section, we now describe a random model for the Gaussian primes, akin to Cram\'er's model. We define our probability space to be over all subsets $\Pi$ of $\CC$ satisfying the following properties.
\begin{itemize}
    \item We will fix $\pm1\pm i$ always live in $\Pi.$
    \item The set of norms $\op N(\Pi):=\left\{|z|^2:z\in\Pi\right\}$ consists exclusively of integers, where an integer $n>1$ has probability $1/(2\log n)$ of appearing as a norm.
    \item For each norm $n\in\op N(\Pi),$ there are eight elements of $\Pi.$ An angle $\theta$ is chosen randomly from $[0,\pi/4),$ and then our eight elements are $\sqrt ne^{\pm i\theta}$ and their associates.
\end{itemize}
Note that we are choosing norms with probability $1/(2\log n)$ because all but a density-zero set of the norms in $\ZZ[i]$ are $1\pmod4$ primes, which consist of roughly half (the density) of the rational primes; in other words, we have rigged the Prime number theorem for $\ZZ[i]$ into our norm distribution. Also, it is worth noting that the symmetry of associates and conjugates is built into the model.

In practice, the model behaves as two infinite tuples of random variables: the norms, which are a subset of $\ZZ$ chosen akin to a ``sparse'' Cram\'er's model; and the angles, which are essentially randomly chosen real numbers in $[0,\pi/4).$ As an example of what we can do, we have the following case of \autoref{prop:primwalk}.
\begin{prop} \label{prop:gaussiancramerex}
    Over all subsets $\Pi\subseteq\CC$ weighted according to the model above, we have that
    \[\sum_{\substack{\op N(\pi)\le x\\\pi\in\Pi}}e^{4i\theta_\pi}=O\left(x^{1/2+\varepsilon}\right)\]
    with probability $1-x^{-2}.$
\end{prop}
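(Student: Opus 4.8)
The plan is to exploit the independence built into the model and reduce the claim to a concentration inequality for a sum of independent, bounded, mean-zero random variables. First I would compute the contribution of a single norm. For a norm $n\in\op N(\Pi)$ with chosen angle $\theta_n\in[0,\pi/4)$, the eight elements of $\Pi$ of that norm are $\sqrt n\,e^{\pm i\theta_n}$ together with their associates, i.e.\ the points with angles $\pm\theta_n+k\pi/2$ for $k\in\{0,1,2,3\}$. Since $e^{4i(\theta+k\pi/2)}=e^{4i\theta}e^{2\pi ik}=e^{4i\theta}$, the four rotates of $\theta_n$ all contribute the same value $e^{4i\theta_n}$, and likewise the four rotates of $-\theta_n$ all contribute $e^{-4i\theta_n}$; summing gives exactly $4e^{4i\theta_n}+4e^{-4i\theta_n}=8\cos(4\theta_n)$. (This is the analogue of the ``reinforcement'' of associates seen in the $n=4$ case of \autoref{thm:gaussian}.) The fixed elements $\pm1\pm i$ contribute a bounded amount $O(1)$. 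Hence
\[\sum_{\substack{\op N(\pi)\le x\\\pi\in\Pi}}e^{4i\theta_\pi}=8\sum_{2<n\le x}X_n\cos(4\theta_n)+O(1),\]
where $X_n$ is the indicator that $n$ appears as a norm and the pairs $(X_n,\theta_n)$ are mutually independent across $n$.

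Next I would verify the summands are mean-zero. Since $\theta_n$ is uniform on $[0,\pi/4)$, the variable $4\theta_n$ is uniform on $[0,\pi)$, so $E[\cos(4\theta_n)]=\frac4\pi\int_0^{\pi/4}\cos(4\theta)\,d\theta=0$, and by independence $E[X_n\cos(4\theta_n)]=0$ as well. Thus each term $8X_n\cos(4\theta_n)$ is an independent random variable taking values in $[-8,8]$ with mean zero, and there are at most $x$ of them.

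Finally I would invoke Hoeffding's inequality. Applying it to the $\le x$ independent summands, each of range at most $16$, yields for $t=x^{1/2+\varepsilon}$
\[\Pr\!\left(\left|\sum_{2<n\le x}8X_n\cos(4\theta_n)\right|\ge t\right)\le 2\exp\!\left(-\frac{2t^2}{256x}\right)=2\exp\!\left(-\frac{x^{2\varepsilon}}{128}\right),\]
which is smaller than $x^{-2}$ once $x$ is large. Absorbing the $O(1)$ from the fixed elements, this gives $\big|\sum e^{4i\theta_\pi}\big|=O(x^{1/2+\varepsilon})$ with probability at least $1-x^{-2}$, as claimed.

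I expect the only genuinely delicate point to be the choice of concentration inequality rather than any of the algebra. A variance computation gives $\op{Var}\sim 16\,x/\log x$, so a naive Chebyshev bound controls deviations of size $x^{1/2+\varepsilon}$ only with probability $1-O\!\left(x^{-2\varepsilon}/\log x\right)$, which is far too weak to reach $1-x^{-2}$ for small $\varepsilon$. It is the exponential tail from Hoeffding (or Bernstein) that upgrades this to the required probability, and the availability of that tail hinges precisely on the uniform boundedness of each $\cos(4\theta_n)$ together with the independence across distinct norms.
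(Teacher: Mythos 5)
Your proof is correct, and your reduction is the same as the paper's: both arguments collapse the eight elements of each norm $n$ into a single bounded, mean-zero, independent summand $8\cos(4\theta_n)$ (you carry this algebra out more carefully than the paper, which just says ``after accounting for associates and conjugates,'' and you also handle the fixed elements $\pm1\pm i$, which the paper silently ignores). The genuine difference is the concentration step. The paper does not use second-moment Chebyshev---which, as you rightly observe, is far too weak---but the $k$-th moment method: it expands $\mathbb E\left[|X|^k\right]$, uses independence to kill every term containing a lone factor $\cos(4\theta_n)$, counts the surviving terms as $O_k\left(x^{k/2}\right)$, concludes $\mathbb E\left[|X|^k\right]=O\left(x^{k/2}\right)$, and applies the Markov/Chebyshev bound to get failure probability $x^{-\varepsilon k}$, finally taking $k\ge2/\varepsilon$; this deliberately mirrors the proof of \autoref{thm:cramerpsums}, where the summands $n^\alpha\left(1_{\mathcal P}(n)-\frac1{\log n}\right)$ are handled identically. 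Your Hoeffding argument replaces that combinatorial moment count with an off-the-shelf exponential inequality, and it buys a strictly stronger conclusion: failure probability $2\exp\left(-x^{2\varepsilon}/128\right)$ instead of a polynomial $x^{-\varepsilon k}$, with no need to track the constant $C_k$ or the dependence of the implied constant on the choice of $k$. What the paper's route buys is uniformity of machinery across the paper (one tool reused verbatim in the appendix); Hoeffding would in fact also work there after normalizing by the range $x^\alpha$, so this is economy of exposition rather than necessity. Your argument is complete and, if anything, sharper.
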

\begin{proof}
    We begin by rewriting without the complex numbers. For each norm $n\in\op N(\Pi),$ we let $\theta_n\in[0,\pi/4)$ be the corresponding angle. This makes the sum equal to
    \[X:=\sum_{\substack{n\in\op N(\pi)\\n\le x}}8\cos(4\theta_n)\]
    after accounting for associates and conjugates. At this point, the computation is essentially a random walk, but we will write it out for completeness; for example, there might be worries about the distribution of the norms, but this does not have an effect.
    
    As mentioned above, the $\theta_n$s are essentially random variables on their own, so we are able to immediately compute the expected value here as
    \[\mathbb E[X]=\sum_{\substack{n\in\op N(\pi)\\n\le x}}\mathbb E\left[8\cos(4\theta_n)\right]=0\]
    by linearity. We now study higher moments in order to appeal to Chebychev's inequality, as in \autoref{thm:cramerpsums}; we refer to its proof for a more explicit computation. In particular, we look at the expansion of
    \[\Bigg(\sum_{\substack{n\in\op N(\pi)\\n\le x}}\cos(4\theta_n)\Bigg)^k\]
    for an integer $k$ to be fixed later. Because the $\theta_n$ are independent variables, the expected value here will vanish for all terms in the expansion, except for those which do not contain exactly one $\cos(4\theta_n)$ term for any particular $n.$
    
    As in \autoref{thm:cramerpsums}, there are at most $O_k\left(x^{k/2}\right)$ of these terms to worry about (note that the number of norms is less than or equal to $x$), and each term is $O(1)$ at most, so we see
    \[\mathbb E\left[X^k\right]=\mathbb E\left[\Bigg|\sum_{\substack{n\in\op N(\pi)\\n\le x}}\cos(4\theta_n)\Bigg|^k\right]=O\left(x^{k/2}\right).\]
    By Chebychev's inequality, we get that
    \[\mathbb P\left[|X|\ge C_k^{1/k}x^\varepsilon\cdot x^{1/2}\right]\le x^{-\varepsilon k},\]
    for some constant $C_k.$ Setting $k$ large enough (with respect to $\varepsilon$) gets us $X=O\left(x^{1/2+\varepsilon}\right)$ with probability $1-x^{-2}.$
\end{proof}
\begin{remark}
    Technically, we have not used the distribution of norms in the above proof, but this is roughly because the summands are not very sensitive to the size or number of the primes. In contrast, if we were just counting the number of primes, then this distribution of norms will matter.
\end{remark}
Note that the above proposition continues the paradigm that Cram\'er-like models roughly allow us to reduce the error term by a $1/2$ in the exponent. This error term also fits the data; see \autoref{fig:gaussianwalk} to compare the error term for the sum over the actual Gaussian primes.
\begin{figure}[!htbp]
    \centering
    \includegraphics[width=0.7\textwidth]{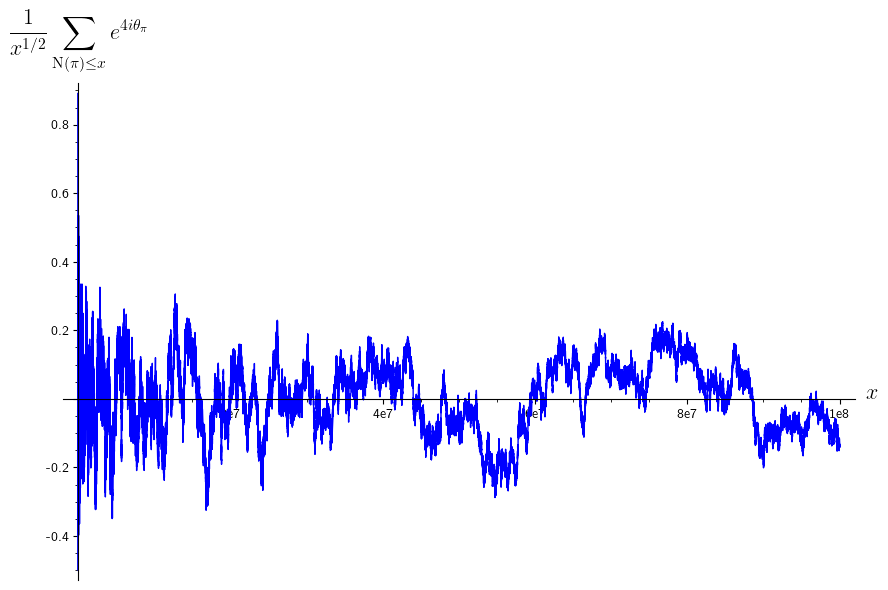}
    \caption{Sharpness of the error of \autoref{prop:gaussiancramerex}}
    \label{fig:gaussianwalk}
\end{figure}

\noindent With this in mind, we make the following conjecture.
\begin{conj}
    Fix $y$ a nonzero real number and $\varepsilon>0.$ Then, for any real number $x,$
    \[\sum_{\op N(\pi)\le x}e^{iy\theta_\pi}=O_\varepsilon\left(x^{1/2+\varepsilon}\right).\]
\end{conj}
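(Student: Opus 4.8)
The plan is to recognize the sum as a sum of Hecke Grössencharaktere over primes and to extract square-root cancellation from the Generalized Riemann Hypothesis for the associated Hecke $L$-functions; unconditionally this strength is out of reach, so the proposal is really a conditional program.

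First I would reduce to the case where $e^{iy\theta_\pi}$ is genuinely a character. Because the units $\{\pm1,\pm i\}$ act on an associate class of $\pi$ by rotating its angle through multiples of $\pi/2$, summing $e^{iy\theta}$ over a full associate class introduces the geometric factor $\sum_{j=0}^3 e^{i\pi jy/2}$. For $y\in\ZZ\setminus4\ZZ$ this factor vanishes, so the sum is \emph{identically} zero and there is nothing to prove. When $4\mid y$, say $y=4m$ with $m\ne0$, the map $\xi_m(\alpha):=(\alpha/|\alpha|)^{4m}=e^{4mi\theta_\alpha}$ is trivial on units and hence descends to a Hecke Grössencharakter of $\mathbb Q(i)$ on nonzero ideals, with $e^{iy\theta_\pi}=\xi_m(\pi)$. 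For general (non-integer) $y$ one would Fourier-expand $\theta\mapsto e^{iy\theta}$ on the quotient circle $[0,\pi/2)$ in the orthonormal basis $\{e^{4mi\theta}\}_{m\in\ZZ}$ and interchange summation; the coefficients decay like $O(1/|m|)$, reducing everything to the characters $\xi_m$. The delicate point in this last reduction is that the $m=0$ (trivial character) contribution carries a main term of size $x/\log x$ and must be isolated and controlled separately.

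Next, for a fixed nonzero $m$ I would attach to $\xi_m$ its Hecke $L$-function $L(s,\xi_m)=\sum_{\mathfrak a}\xi_m(\mathfrak a)\op N(\mathfrak a)^{-s}=\prod_{\mathfrak p}(1-\xi_m(\mathfrak p)\op N(\mathfrak p)^{-s})^{-1}$, which by the work of Hecke (or Tate) has meromorphic continuation and a functional equation and is entire since $\xi_m$ is nontrivial. Applying Perron's formula to $-L'/L(s,\xi_m)$ yields an explicit formula writing the von Mangoldt–weighted sum $\psi(x,\xi_m)=\sum_{\op N(\mathfrak a)\le x}\Lambda(\mathfrak a)\xi_m(\mathfrak a)$ as $-\sum_\rho x^\rho/\rho$ over the nontrivial zeros $\rho$ of $L(s,\xi_m)$, plus negligible terms. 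Under GRH for $L(s,\xi_m)$ every $\rho$ has $\Re\rho=1/2$, so $\psi(x,\xi_m)=O(x^{1/2}(\log x)^2)$; passing from prime powers to primes and removing the $\Lambda$-weight by partial summation, exactly as in the classical deduction of prime-counting bounds from GRH, produces $\sum_{\op N(\pi)\le x}\xi_m(\pi)=O_\varepsilon(x^{1/2+\varepsilon})$. Summing back against the $O(1/|m|)$ Fourier coefficients then recovers the bound for general $y$, provided the implied constants are uniform in $m$.

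The hard part is precisely the GRH input: cancellation of this strength is, in the classical analogy, equivalent to GRH for the family $\{L(s,\xi_m)\}$, so an unconditional proof is beyond present technology. The best the standard Hecke-$L$ zero-free region gives unconditionally is a saving of shape $x\exp(-c\sqrt{\log x})$, which reproves and slightly sharpens the qualitative content of \autoref{prop:primwalk} but falls far short of $x^{1/2+\varepsilon}$. A secondary obstacle, relevant only to the continuous-$y$ reduction, is establishing the error estimates uniformly as $m\to\infty$, since the analytic conductor of $\xi_m$ grows with $m$ and one must ensure the tail of the Fourier expansion does not swamp the savings. It is reassuring that the random-model computation of \autoref{prop:gaussiancramerex} yields exactly the exponent $1/2+\varepsilon$, which is precisely the cancellation GRH predicts; this is the main heuristic evidence for the conjecture.
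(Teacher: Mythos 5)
The paper offers no proof of this statement --- it is presented as a conjecture, supported only by the random-model computation in \autoref{prop:gaussiancramerex} and by numerical data --- so there is nothing to compare your argument against line by line; the question is whether your conditional program is sound. For $y\in4\ZZ\setminus\{0\}$ it is: the functions $\xi_m(\alpha)=(\alpha/|\alpha|)^{4m}$ are exactly the Hecke Gr\"ossencharakters of $\ZZ[i]$ that are trivial on units, GRH for $L(s,\xi_m)$ does give $O_\varepsilon\left(x^{1/2+\varepsilon}\right)$ by the standard explicit-formula argument, and you are right that this strength is unconditionally out of reach --- this correctly explains why the statement is a conjecture and not a theorem. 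Your observation that the sum vanishes identically for $y\in\ZZ\setminus4\ZZ$ is also correct, and matches the paper's own discussion of $\sum\pi$ and $\sum\pi^2$.

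The genuine gap is the non-integer case, and it is fatal not only to your reduction but to the conjecture itself; your own decomposition exposes this, and you should have pushed it one step further. Group the sum by associate classes with representative angle $\theta_0\in[0,\pi/2)$; the four associates have angles $\theta_0+j\pi/2\in[0,2\pi)$ exactly, so the full sum factors as $\bigl(\sum_{j=0}^3e^{i\pi jy/2}\bigr)\sum_{\text{classes}}e^{iy\theta_0}$. For $y\notin\ZZ$ the unit factor is nonzero, and the $m=0$ coefficient $c_0=\frac2\pi\int_0^{\pi/2}e^{iy\theta}\,d\theta$ is nonzero as well (it vanishes only when $e^{iy\pi/2}=1$, i.e.\ when $4\mid y$). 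Since the angles $\theta_\pi$ are equidistributed in $[0,2\pi)$ (Hecke's theorem, or \autoref{lem:technical} applied to finitely many sectors) and $\theta\mapsto e^{iy\theta}$ is Riemann integrable, one gets the asymptotic
\[\sum_{\op N(\pi)\le x}e^{iy\theta_\pi}=\frac{e^{2\pi iy}-1}{2\pi iy}\cdot8\op{Li}(x)\left(1+o(1)\right),\]
and the constant $\frac{e^{2\pi iy}-1}{2\pi iy}$ is nonzero precisely when $y\notin\ZZ$. So for non-integer $y$ the sum is genuinely of size $x/\log x$: the $m=0$ term you propose to ``isolate and control separately'' is not an error to be controlled but a nonzero main term, and no refinement of the program can remove it. The correct conclusion is that the conjecture is \emph{false} for $y\notin\ZZ$, trivially true for $y\in\ZZ\setminus4\ZZ$, and of GRH strength exactly for $y\in4\ZZ\setminus\{0\}$. (The same integrality issue already sits inside the paper: the proof of \autoref{thm:gaussian} uses that $\sum_{k=1}^{nN}\cos(2\pi k/N)=0$, which requires $n\in\ZZ$, so \autoref{prop:primwalk} --- and hence this conjecture --- should be restricted to integer $y$ as well.)
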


\section{Acknowledgements}
We would like to thank our counselor Anupam for advice and patience as we worked through the project this year. In addition, we are grateful to thank Professor Lowry-Duda for the project and advice midway through the program. We also thank Professor Fried and Arya Vadnere for organizing the research labs. Lastly, we thank the PROMYS program for the opportunity to participate in the research labs, and we thank the Clay Mathematics Institute for supporting PROMYS.

\appendix

\section{Estimates with Cram\'er's Model} \label{sec:cramer}

Before doing any estimations with Cram\'er's model, we pick up the following technical lemma.
\begin{lem} \label{lem:upgradedabel}
    Let $\alpha$ be a nonnegative real number. Then
    \[\sum_{3\le n\le x}\frac{n^\alpha}{\log n}=\int_2^x\frac{t^\alpha}{\log t}\,dt+O\left(\frac{x^\alpha}{\log x}\right).\]
\end{lem}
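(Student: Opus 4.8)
The plan is to treat the summand $g(t):=t^\alpha/\log t$ as a smooth function and to compare the sum against the integral term by term, controlling the discrepancy by the total variation of $g$. The starting point is the exact identity
\[
\sum_{3\le n\le x}\frac{n^\alpha}{\log n}-\int_2^x\frac{t^\alpha}{\log t}\,dt=\sum_{n=3}^{\lfloor x\rfloor}\left(g(n)-\int_{n-1}^n g(t)\,dt\right)-\int_{\lfloor x\rfloor}^x g(t)\,dt,
\]
which follows from writing $\int_2^{\lfloor x\rfloor}g=\sum_{n=3}^{\lfloor x\rfloor}\int_{n-1}^n g$ and peeling off the fractional tail. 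The tail $\int_{\lfloor x\rfloor}^x g$ spans an interval of length less than $1$ and is bounded by $g(x)=O(x^\alpha/\log x)$, so it is immediately harmless.

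Next I would bound the staircase terms. For $t\in[n-1,n]$ we have $|g(n)-g(t)|\le\int_{n-1}^n|g'(s)|\,ds$, and integrating this in $t$ over $[n-1,n]$ gives $\bigl|g(n)-\int_{n-1}^n g\bigr|\le\int_{n-1}^n|g'(s)|\,ds$. Summing over $n$ telescopes the right-hand side into a single total-variation integral,
\[
\sum_{n=3}^{\lfloor x\rfloor}\left|g(n)-\int_{n-1}^n g(t)\,dt\right|\le\int_2^{\lfloor x\rfloor}|g'(s)|\,ds,
\]
so the entire problem reduces to establishing $\int_2^x|g'|=O(x^\alpha/\log x)$.

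To estimate the variation I would compute
\[
g'(t)=\frac{t^{\alpha-1}(\alpha\log t-1)}{(\log t)^2},
\]
whose sign is that of $\alpha\log t-1$; thus $g$ decreases on $(1,e^{1/\alpha})$ and increases on $(e^{1/\alpha},\infty)$, with the single turning point $t_0:=e^{1/\alpha}$ depending only on $\alpha$. Splitting the variation at $t_0$ (or noting that $g$ is already monotone on $[2,x]$ when $t_0\le 2$), each monotone piece contributes only the difference of its endpoint values, so
\[
\int_2^x|g'(s)|\,ds=O\bigl(g(2)\bigr)+O\bigl(g(x)\bigr)=O(1)+O\!\left(\frac{x^\alpha}{\log x}\right).
\]

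The main obstacle---indeed the only place requiring care---is this residual $O(1)$, which comes from the bounded, non-monotone initial segment together with the mismatch between the lower limits $2$ and $3$. It is a genuine, generically nonzero constant, and it is absorbed into the stated error $O(x^\alpha/\log x)$ precisely because $\alpha>0$ forces $x^\alpha/\log x\to\infty$. (The degenerate boundary value $\alpha=0$ is the delicate case, since there $x^\alpha/\log x\to0$ and the constant survives; but this value does not arise in the applications, where $\alpha=1$.) Combining the fractional tail, the total-variation bound, and this constant then yields the claimed estimate.
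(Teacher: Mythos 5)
Your proof is correct for $\alpha>0$ and takes a genuinely different, more elementary route than the paper. The paper proves this lemma by Abel summation ($\sum_{3\le n\le x}f(n)=\lfloor x\rfloor f(x)-2f(2)-\int_2^x\lfloor t\rfloor f'(t)\,dt$), splits $\lfloor t\rfloor=t-\{t\}$, and then integrates by parts back to recover $\int_2^x f(t)\,dt$; you instead compare the sum to the integral one unit interval at a time and control the discrepancy by the total variation $\int_2^x|g'|$. The two arguments hinge on the same quantity in the end, but yours handles it more carefully: the paper bounds the fractional-part term by ``$\int_2^x f'(t)\,dt=O(f(x))$,'' which tacitly assumes $f'\ge0$, whereas you locate the turning point $t_0=e^{1/\alpha}$ and split the variation there, picking up the endpoint contribution $g(2)=O(1)$ explicitly. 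Your flag about $\alpha=0$ is a genuine catch, not mere pedantry: for $\alpha=0$ the difference $\sum_{3\le n\le x}\frac1{\log n}-\int_2^x\frac{dt}{\log t}$ tends to a nonzero constant (already at $x=3$ it is about $-0.21$ and it only decreases), so the lemma as stated, which allows $\alpha=0$, is actually false in that boundary case, and the paper's proof silently breaks exactly at the step noted above. One correction to your closing parenthetical: the paper does invoke \autoref{thm:cramerpsums} with $\alpha=0$ (to get $\mathrm{Li}(p_n)=n+O_\varepsilon(p_n^{1/2+\varepsilon})$ in \autoref{prop:cramer}), not only with $\alpha=1$; the application nevertheless survives because the theorem immediately relaxes the lemma's error to $O_\varepsilon(x^{\alpha+1/2+\varepsilon})$, which absorbs a stray constant even when $\alpha=0$.
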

\begin{proof}
    The idea is to integrate by parts once to make the error terms easier to control in a continuous setting, and then integrate by parts back to get the statement. Set $f(t):=t^\alpha/\log t$ for brevity; it is continuously differentiable for $t>1.$ Using summation by parts, we see that
    \[\sum_{3\le n\le x}f(n)=\floor{x}f(x)-2f(2)-\int_2^x\floor tf'(t)\,dt.\]
    Note that $\floor{x}f(x)=xf(x)+O(f(x)).$ As for the integral, we can express it as
    \[\int_2^x\floor tf'(t)\,dt=\int_2^xtf'(t)\,dt-\int_2^x\{t\}f'(t)\,dt,\]
    and the right-hand integral is bounded by $\int_2^xf'(t)\,dt=O(f(x)).$ Putting everything together, we see
    \[\sum_{3\le n\le x}f(n)=xf(x)+O(f(x))-2f(2)-\int_2^xtf'(t)\,dt+O(f(x)).\]
    The error terms combine to $O(f(x)).$ The rest can be collapsed using integration by parts into
    \[\sum_{3\le n\le x}f(n)=\int_2^xf(t)\,dt+O(f(x)).\]
    This is exactly what we wanted.
\end{proof}

The main point of this section is to prove the following statement.
\begin{thm} \label{thm:cramerpsums}
    Let $\alpha$ be a nonnegative real number. Then over all sequences $\mathcal P\subseteq\ZZ$ with probability weighted by Cram\'er's model, we have that
    \[\sum_{\substack{p\le x\\p\in\mathcal P}}p^\alpha=\int_2^x\frac{t^\alpha}{\log t}\,dt+O_\varepsilon\left(x^{\alpha+1/2+\varepsilon}\right),\]
    with probability at least $1-x^{-2}.$
\end{thm}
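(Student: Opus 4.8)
The plan is to treat the sum as a sum of independent Bernoulli-weighted terms, isolate its mean (which supplies the main term) from a centered fluctuation (which supplies the error), and then control the fluctuation by the method of high moments, exactly in the spirit of the later computation in \autoref{prop:gaussiancramerex}. For setup, for each odd $k\ge 5$ let $X_k$ be the indicator that $k\in\mathcal P$, so that the $X_k$ are mutually independent with $\mathbb E[X_k]=2/\log k$. Ignoring the harmless deterministic contribution of $p_1=3$, the quantity of interest is $\sum_{\substack{5\le k\le x\\k\text{ odd}}}X_kk^\alpha$, and writing $Y:=\sum_k\left(X_k-\mathbb E[X_k]\right)k^\alpha$ for the centered sum, I would split
\[
\sum_{\substack{p\le x\\p\in\mathcal P}}p^\alpha=\sum_{\substack{5\le k\le x\\k\text{ odd}}}\frac{2}{\log k}k^\alpha+Y+O(3^\alpha).
\]
It then remains to identify the deterministic first sum with the integral and to bound $Y$ with high probability.

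For the main term I would invoke \autoref{lem:upgradedabel}. The factor of $2$ played against the spacing $2$ of the odd integers makes $\sum_{k\text{ odd}}\frac{2k^\alpha}{\log k}$ a Riemann-type approximation to $\int_2^x\frac{t^\alpha}{\log t}\,dt$. Concretely, comparing the sum over odd $k$ with the full sum $\sum_{3\le n\le x}\frac{n^\alpha}{\log n}$ costs only the alternating difference $\sum_{n}(-1)^n\frac{n^\alpha}{\log n}=O\!\left(x^\alpha/\log x\right)$ (the summand is eventually monotone, so summation by parts applies), and \autoref{lem:upgradedabel} converts the full sum into the integral with error $O\!\left(x^\alpha/\log x\right)$. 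Hence the expected value equals $\int_2^x\frac{t^\alpha}{\log t}\,dt+O\!\left(x^\alpha/\log x\right)$, an error comfortably inside $O_\varepsilon\!\left(x^{\alpha+1/2+\varepsilon}\right)$.

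The heart of the argument is the fluctuation bound, and this is the step I expect to be the main obstacle: the naive second-moment estimate $\mathrm{Var}(Y)=O\!\left(x^{2\alpha+1}/\log x\right)$ only yields, through Chebyshev, a deviation of size about $x^{\alpha+3/2}$ at confidence $1-x^{-2}$, which is far too weak. To recover the exponent $\alpha+\tfrac12$ I would instead estimate a high even moment $\mathbb E[Y^k]$. Expanding $Y^k$ multinomially and using independence together with $\mathbb E[X_k-\mathbb E[X_k]]=0$, every term in which some index appears exactly once vanishes, so the surviving terms involve at most $\lfloor k/2\rfloor$ distinct indices. There are $O_k\!\left(x^{k/2}\right)$ such terms, and each is bounded by $\prod_j k_j^{\alpha m_j}\le x^{\alpha k}$ times an $O_k(1)$ product of centered moments, whence $\mathbb E[Y^k]=O_k\!\left(\left(x^{\alpha+1/2}\right)^k\right)$.

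Finally, Markov's inequality applied to this moment gives $\mathbb P\!\left(|Y|\ge C_k^{1/k}x^\varepsilon\cdot x^{\alpha+1/2}\right)\le x^{-\varepsilon k}$ for a constant $C_k$, and choosing $k$ large enough (depending on $\varepsilon$) that $\varepsilon k\ge 2$ forces $Y=O_\varepsilon\!\left(x^{\alpha+1/2+\varepsilon}\right)$ with probability at least $1-x^{-2}$. Combining this with the main-term analysis and absorbing the smaller $O\!\left(x^\alpha/\log x\right)$ error into $O_\varepsilon\!\left(x^{\alpha+1/2+\varepsilon}\right)$ then completes the proof. The only genuinely delicate bookkeeping is the count of surviving terms in the moment expansion and the verification that their total contribution scales like $\left(x^{\alpha+1/2}\right)^k$; everything else is a routine integral comparison.
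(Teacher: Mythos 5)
Your proposal is correct and follows essentially the same route as the paper's proof: split off the mean, identify it with the integral via \autoref{lem:upgradedabel}, and control the centered fluctuation by a high-moment expansion (terms with a singleton index vanish, $O_k(x^{k/2})$ surviving tuples each of size $O((x^\alpha)^k)$) followed by Markov's inequality with $k\ge 2/\varepsilon$. The only difference is cosmetic: you work with the odd-integers-only model of Section 3 (probability $2/\log k$) and so add a harmless alternating-sum comparison, whereas the paper's appendix works directly with the vanilla model $\mathbb P[n\in\mathcal P]=1/\log n$.
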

\begin{proof}
    This argument is not original. To use Cram\'er's model, we will have to look directly at individual integers, so we rewrite the sum as
    \[\sum_{\substack{p\le x\\p\in\mathcal P}}p^\alpha=\sum_{n\le x}n^\alpha1_{\mathcal P}(n),\]
    where $1_{\mathcal P}$ is the $\mathcal P$-indicator. This will be easier to analyze with an expected value of $0,$ so we remove the main term from the sum by writing
    \[\sum_{n\le x}n^\alpha1_{\mathcal P}(n)=\sum_{3\le n\le x}\frac{n^\alpha}{\log n}-\sum_{3\le n\le x}n^{\alpha}\left(1_{\mathcal P}(n)-\frac1{\log n}\right).\label{eq:cramsimp}\tag{A.1}\]
    The main term, which is the first sum on the right-hand side, is estimated using \autoref{lem:upgradedabel} as
    \[\sum_{3\le n\le x}\frac{n^\alpha}{\log n}=\int_2^x\frac{t^\alpha}{\log t}\,dt+O\left(\frac{x^\alpha}{\log x}\right).\]
    This error term is $O\left(x^\alpha\right),$ so it is safe. It remains to bound the error of \autoref{eq:cramsimp}, for which we set the random variables $X_n:=n^\alpha\left(1_{\mathcal P}(n)-\frac1{\log n}\right)$ for $n\ge3,$ and $X:=\sum_{3\le n\le x}X_n.$ We see that it suffices for $X=O_\varepsilon\left(x^{\alpha+1/2+\varepsilon}\right)$ with probability $1-x^{-2}.$
    
    We would like to bound $X.$ On one hand, we note $\mathbb E[X_n]=0,$ so linearity of expectation gives $\mathbb E[X]=0.$ However, we would like something more sophisticated, so we will use Chebychev's inequality, for which we need to study
    \[\mathbb E\left[|X|^k\right]=\mathbb E\left[\bigg|\sum_{3\le n\le x}X_n\bigg|^k\right],\]
    for some positive integer $k$ to be fixed later. After expanding out this sum and using linearity of expectation, the independence of the $X_n$ will imply that many terms have expected value $0.$ In fact, for a term
    \[X_1^{a_1}X_2^{a_2}\ldots X_{\floor x}^{a_{\floor x}}\text{ such that }a_1+a_2+\cdots+a_{\floor x}=k\]
    to not have expected value $0,$ each $a_n$ must either be $0$ or bigger than $1.$ In particular, each nonzero exponent is at least two, so there are at most $k/2$ exponents with nonzero entries; then there are fewer than $k^{k/2}$ ways to assign the exponents.
    
    So in total, there are fewer than $k^{k/2}\binom{\floor{x}}{k/2}\le C_kx^{k/2}$ terms to worry about, for some constant $C_k$ depending on $k.$ Further, each term consists of $k$ terms of $X_\bullet=n^\alpha\left(1_{\mathcal P}(n)-\frac1{\log n}\right)=O\left(x^\alpha\right).$ So we have, in total, is bounded by
    \[\mathbb E\left[|X|^k\right]\le C_kx^{k/2}\left(x^\alpha\right)^k,\]
    where $C_k$ is some constant depending on $k$ (but not $x$).
    
    We are now ready to use Chebychev's inequality. Setting $\varepsilon>0$ small, we see that
    \[\mathbb P\left[|X|\ge x^\varepsilon\mathbb E\left[|X|^k\right]^{1/k}\right]\le x^{-\varepsilon k}.\]
    Using our bound, we see that
    \[\mathbb P\left[|X|\ge C_k^{1/k}x^{\alpha+1/2+\varepsilon}\right]\le x^{-\varepsilon k}.\]
    With $\varepsilon>0$ fixed, we now set $k\ge2/\varepsilon$ so that $X=O_\varepsilon\left(x^{\alpha+1/2+\varepsilon}\right)$ with probability $1-x^{-2}.$ (Note the implied constant $C_k^{1/k}$ is now dependent on $\varepsilon.$) This is what we wanted.
\end{proof}
\autoref{fig:psalpha0} and \autoref{fig:psalpha1} showcase the error term from \autoref{thm:cramerpsums} on the primes. It appears that Cram\'er's model has successfully retrieved the correct magnitude of error term, though there are subtleties, such as the fact that the error largely looks negative.

\begin{figure}[!htbp]
    \centering
    \includegraphics[width=0.7\textwidth]{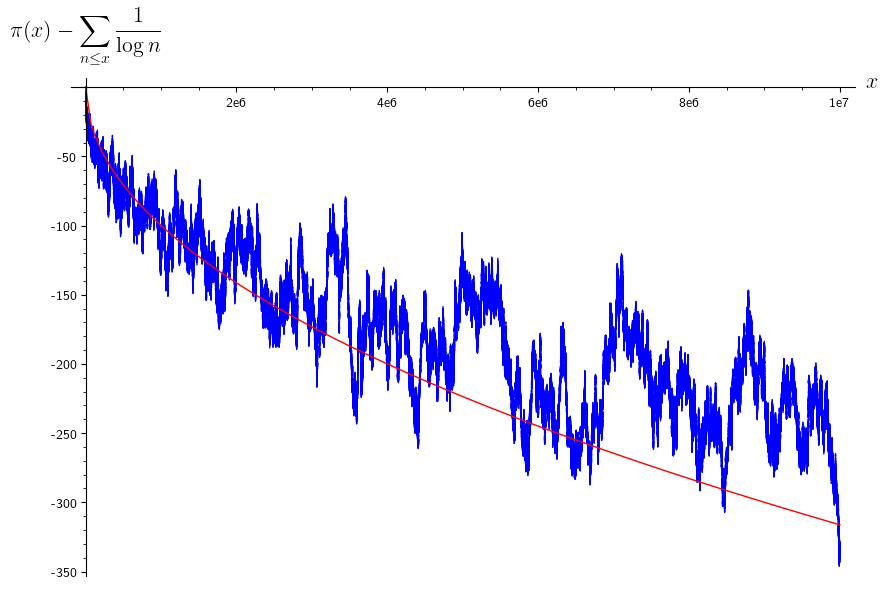}
    \caption{Verification of the error bound in \autoref{thm:cramerpsums}; $y=-x^{1/2}/10$ in red}
    \label{fig:psalpha0}
\end{figure}
\begin{figure}[!htbp]
    \centering
    \includegraphics[width=0.7\textwidth]{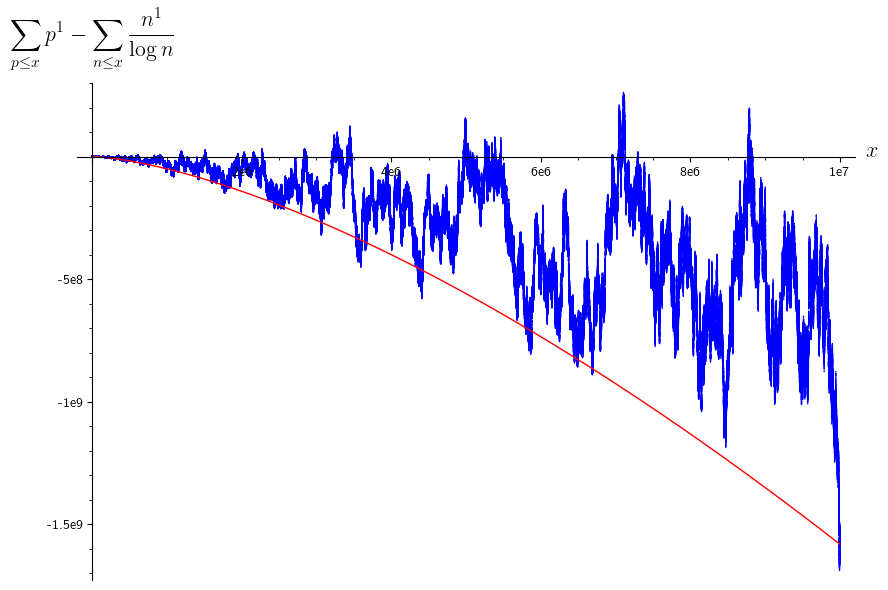}
    \caption{Verification of the error bound in \autoref{thm:cramerpsums}; $y=-x^{3/2}/20$ in red}
    \label{fig:psalpha1}
\end{figure}

\newpage
\printbibliography[title={References}]

\end{document}